\documentclass[11pt]{amsart}
\usepackage{amssymb,latexsym,comment,url,amsmath}

\usepackage[T1]{fontenc}
\usepackage{rotating,graphicx}
\usepackage{currvita}
\usepackage{diagbox} 
\usepackage{xcolor}
\usepackage{subcaption}
\usepackage{multirow}
\usepackage{array}
\usepackage{longtable}
\usepackage{ltablex}
\usepackage{float}
\restylefloat{table}

\newcommand{\p}{{\mathfrak{p}}}
\newcommand{\Fp}{{\mathbb{F}_p}}

\newcommand{\Char}{\operatorname{char}}
\newcommand{\tors}{\operatorname{tors}}

\newcommand{\Tr}{\operatorname{Tr}}

\newcommand{\Gal}{\operatorname{Gal}}
\newcommand{\GL}{\operatorname{GL}}

\newcommand{\isom}{ \cong }

\newcommand{\OK}{{\mathcal{O}_K}}

\newcommand{\Q}{{\mathbb Q}}

\newcommand{\Z}{{\mathbb Z}}

\newenvironment{Proof}{\par\noindent{\sc Proof:}}%
                      {\hspace*{\fill}\nobreak$\Box$\par\medskip}
                       {\hspace*{\fill}\nobreak$\Box$\par\medskip}

\newtheorem{Proposition}{Proposition}[section]
\newtheorem{Theorem}[Proposition]{Theorem}

\newtheorem{Corollary}[Proposition]{Corollary}

\theoremstyle{definition}

\newtheorem{Remark}[Proposition]{Remark}
 \newtheorem{Example}[Proposition]{Example}


\addtolength{\hoffset}{-2.8cm}
\addtolength{\textwidth}{3.8cm}

\setlength{\marginparsep}{2mm}
\setlength{\marginparwidth}{20mm}

\renewcommand{\baselinestretch}{1.1}

\begin{document}

\title[Divisibility of orders of reductions of elliptic curves]%
{Divisibility of orders of reductions of elliptic curves}

\author[A. Pajaziti]%
{Antigona~Pajaziti}
\address{Department of Mathematics, Faculty of Science, Technology and Medicine,
University of Luxembourg, Luxembourg}\address{Mathematical Institute, Leiden University, Niels Bohrweg 1, 2333 CA Leiden, The Netherlands}
\email{antigona.pajaziti@uni.lu\\
a.pajaziti@math.leidenuniv.nl}
\author[M. Sadek]%
{Mohammad~Sadek}
\address{Faculty of Engineering and Natural Sciences, Sabanc{\i} University, Tuzla, \.{I}stanbul, 34956 Turkey}
\email{mohammad.sadek@sabanciuniv.edu}

\begin{abstract}
Let $E$ be an elliptic curve defined over $\Q$ and $\widetilde{E}_p$ denote the reduction of $E$ modulo a prime $p$ of good reduction for $E$. The divisibility of $|\widetilde{E}_{p}(\Fp)|$ by an integer $m\ge 2$ for a set of primes $p$ of density $1$ is determined by the torsion subgroups of elliptic curves that are $\Q$-isogenous to $E$.   In this work, we give explicit families of elliptic curves $E$ over $\Q$ together with integers $m_E$ such that the congruence class of $|\widetilde{E}_p(\Fp)|$ modulo $m_E$ can be computed explicitly. In addition, we can estimate the density of primes $p$ for which each congruence class occurs. These include elliptic curves over $\Q$ whose torsion grows over a quadratic field $K$ where $m_E$ is determined by the $K$-torsion subgroups in the $\Q$-isogeny class of $E$. We also exhibit elliptic curves over $\Q(t)$ for which the orders of the reductions of every smooth fiber modulo primes of positive density strictly less than $1$ are divisible by given small integers.
\end{abstract}

\maketitle

\let\thefootnote\relax\footnote{\textbf{Mathematics Subject Classification:} 11G05, 14H52 \\

\textbf{Keywords:} Elliptic curves, growth of torsion, quadratic fields, order of reduction}

\section{Introduction}
For an elliptic curve $E$ over a number field $K$, the Mordell-Weil Theorem states that the set of $K$-rational points $E(K)$ of $E$ is a finitely generated abelian group. It was long believed that the torsion subgroup of $E(K)$, $E(K)_{\tors}$, is not only finite but rather it is uniformly bounded by an integer that merely depends on the degree of the number field $K$. Mazur, \cite{Mazur1}, proved that this is indeed the case when $K$ is the rational field $\Q$ giving an exhaustive list of $15$ possibilities for $E(\Q)_{\tors}$. Kenku, Kamienny and Momose, \cite{Kam, Kenku}, established such an exhaustive list for $E(K)_{\tors}$ when $K$ is a quadratic field. Later on, the uniform boundedness of $E(K)_{\tors}$ was proved by Merel, \cite{Merel}, for any number field $K$ of fixed degree. Although the result is exciting, the uniform bound introduced is far from being sharp leaving the mathematical community with the challenge of finding a list of all possible finite abelian groups that may occur as torsion subgroups of elliptic curves over a number field of a given fixed degree. Recently, such a list was presented for cubic fields, \cite{Sporadic}.

Writing $\p$ for a prime ideal in the ring of integers of $K$ with a corresponding residue field $k_{\p}$ of characteristic $p>2$, we set $\widetilde{E}_\p$ to be the reduction of $E$ modulo $\p$. It can be seen that $E(K)_{\tors}$ embeds in $\widetilde{E}_{\p}(k_{\p})$ for every prime $\p$ of good reduction of $E$ satisfying mild ramification conditions. This implies the divisibility of  $|\widetilde{E}_{\p}(k_{\p})|$ by $|E(K)_{\tors}|$ for every such $\p$. One may pose a question on the existence of an elliptic curve over $K$ together with an integer $m\ge 2$ such that $|\widetilde{E}_{\p}(k_{\p})|$ is divisible by $m$ for all but finitely many primes $\p$, or more generally, for a set of primes $\p$ of (natural) density $1$. Serre, \cite{Serre1}, answers this question revealing that such elliptic curves are the ones that are $K$-isogenous to elliptic curves with nontrivial $K$-rational torsion subgroups where $m$ is the order of the torsion subgroup.  In addition, Serre shows that if $|\widetilde{E}_{\p}(k_{\p})|\equiv a\mod m$ for a set of primes $\p$ of density $1$, then $a$ must be $0$ and $m$ must be the order of the torsion subgroup of a $K$-isogenous elliptic curve.   
Consequently, Katz noticed that one may compute the greatest common divisor, $\gcd_{\p\in S}|\widetilde{E}_{\p}(k_{\p})|$, of $|\widetilde{E}_{\p}(k_{\p})|$ where $\p$ runs over a set $S$ of primes of good reduction of $E$ of density one under mild conditions on the ramification indices of the primes, \cite{Katz}. 

The above discussion gives rise to the following questions. If $m\ge 2$ is not a divisor of the order of the torsion subgroup of a $K$-isogenous elliptic curve to $E$, is it possible to determine all the possible congruence classes of $|\widetilde{E}_{\p}(k_{\p})|\mod m$ for all primes $\p$ of good reduction of $E$? In addition, can one compute the density of primes $\p$ at which each congruence class modulo $m$ occurs? 

Fixing the number field $K$ to be the rational field $\Q$ and writing $\Fp$ for the finite field with $p$ elements where $p$ is a rational prime, the number of $\Fp$-rational points $|\widetilde{E}_p(\Fp)|$, where $p$ is a prime of good reduction of $E$, satisfies the Hasse-Weil bound, namely, $\left||\widetilde{E}_p(\Fp)|-p-1\right|\le 2\sqrt{p}$. Deuring, \cite{Deuring}, proved if $E$ has complex multiplication, then $|\widetilde{E}_p(\Fp)|=p+1$ for a set of primes of density $1/2$. In other words, he proved that the set of supersingular primes of an elliptic curve with complex multiplication has density $1/2$. Counting rational points of reductions of Elliptic curves with complex multiplication has received much attention, see for example \cite{Olson,Raj,Padma, Demirci}. Due to the arithmetic properties of complex multiplication, one may find an explicit description of supersingular primes using congruence classes modulo a fixed integer. For example, given a nonzero rational number $D$, the elliptic curve $E^D: y^2 =x^3 -6Dx^2-3D^2x$ has complex multiplication by $\sqrt{-3}$, more precisely $\operatorname{End}(E^D)\isom \Z[\sqrt{-3}]$. The supersingular primes of the curve $E^D$ are exactly the primes $p\equiv 2\mod 3$; whereas if $p\equiv 1 \mod 3$, then $|\widetilde{E}^D_p(\Fp)|=p+1-2c\left(\frac{D}{p}\right)$ where $p=c^2+3d^2$ with $c\equiv\left(\frac{-1}{p}\right)\mod 3$ and $\left(\frac{\cdot}{p}\right)$ is the Legendre symbol modulo $p$, \cite{Poulakis}. In particular, $|\widetilde{E}^D_p(\Fp)|\equiv 0\mod 3$ if $p\equiv 2\mod 3$, while $|\widetilde{E}^D_p(\Fp)|\equiv 0\text{ or }1\mod 3$ if $p\equiv 1\mod 3$. In fact, taking into account the existence of a $2$-torsion point in $E^D(\Q)$, one sees that $|\widetilde{E}^D_p(\Fp)|\equiv 0\mod 6$ if $p\equiv 5 \mod 6$ and $|\widetilde{E}^D_p(\Fp)|\equiv 0\text{ or }4\mod 6$ if $p\equiv 1\mod 6$. It follows that one may describe the congruence classes of $|\widetilde{E}^D_p(\Fp)|\mod 6$ and the congruence classes modulo $6$ of the primes $p$ at which they occur, hence one may get estimates for the densities of these primes $p$ using Dirichlet's Theorem on primes in arithmetic progression. It is worth noting that one of the main tools to obtain the aforementioned expressions for the number of rational points on reductions of elliptic curves $E:\ y^2=f(x)$ with complex multiplication is computing the character sum $\sum_{x\mod p}\left(\frac{f(x)}{p}\right)$. The reason is that the trace of Frobenius $a_p(E):=p+1-|\widetilde{E}_p(\Fp)|$ of the curve $E$ at $p$ can be expressed in terms of the latter character sum. 

Given an integer $m\ge 2$ that is not a divisor of the order of the torsion subgroup of any elliptic curve in the $\Q$-isogeny class of $E$, one knows that $E$ has a torsion point of order $m$ under base change to the division field $\Q(E[m])$, where $E[m]$ is the group of $m$-torsion points on $E$. By considering the splitting behavior of the primes in the latter field, one can easily show that the primes $p$ for which $|\widetilde{E}_p(\Fp)|$ is divisible by $m$ is of positive density. In this work, we consider the case when $|E(K)_{\tors}|\equiv 0\mod m$ for some quadratic field $K$. In \cite{Enrique,Enrique2, Najman}, the possible torsion subgroups of $E$ that appear when the base is changed from $\Q$ to a quadratic field $K$ are listed. Using the arithmetic of quadratic fields, it can be seen that orders of reductions of $E$ modulo primes of good reduction are closely linked to the torsion subgroups of certain quadratic twists of $E$. This enables us to obtain information about the congruence classes of $|\widetilde{E}_p(\Fp)|$ modulo $|E(K)_{\tors}|$. In addition, we show the existence of an integer $N$ that depends on $K$ such that for primes $p$ modulo $ N$, one can determine explicitly the possibilities for $|\widetilde{E}_p(\Fp)|$ modulo $|E(K)_{\tors}|$. Thus, given any $a$ modulo $|E(K)_{\tors}|$, an estimate for the density of primes $p$ such that $|\widetilde{E}_p(\Fp)|\equiv a \mod |E(K)_{\tors}|$ can be computed. The following result will be proved in the paper. 

\begin{Theorem} 
Let $K=\Q(\sqrt{d})$, where $d$ is a square free integer. Let $E$ be an elliptic curve defined over $\mathbb Q$. Set $M_K(E)=\sup\{|E'(K)_{\tors}|: E'\textrm{ is $\Q$-isogenous to }E\}$. Assume that $M_K(E)>M_\Q(E)$. 
If $p\nmid 2d M_K(E)$ is a prime of good reduction of $E$, then
$$ |\widetilde{E}_p(\mathbb F_p)|\equiv \begin{cases}
0 \mod M_K(E) &\text{ if }\left(\frac{d}{p}\right)=1\\
2p+2 \mod M_\Q(E^d) &\text{ if } \left(\frac{d}{p}\right)=-1,\\
\end{cases}$$
where $E^d$ is the quadratic twist of $E$ by $d$.
\end{Theorem}

The latter result provides families of elliptic curves $E$, with no complex multiplication, for which one can find integers $m$ such that $|\widetilde{E}_p(\Fp)|\mod m$ are completely determined and the primes of occurrence $p$ are explicitly known. Up to the knowledge of the authors, there are only few examples in the literature of pairs of elliptic curves with no complex multiplication together with integers $m$ for which the latter information is available. For such an example see \cite[Theorem 2]{KKP} where the congruence classes $|\widetilde{E}_p(\Fp)|\mod 12$ of the curve $E:\ y^2=x^3-12x-11$ are given. We can recover the occurrences of these classes by noticing the growth of the torsion subgroup of a $\Q$-isogenous elliptic curve over the quadratic field $\Q(\sqrt{5})$.

For an elliptic curve over $\Q$ with a nontrivial torsion subgroup of order divisible by $N\ge 1$, the trace of Frobenius of $E$ at a prime $p$ of good reduction satisfies that $a_p(E)\equiv p+1\mod N$. Therefore, unless $N=p$, one sees that the trace of Frobenius $a_p(E)$ can never be $1$ due to a congruence obstruction modulo $ N$. In fact, the Weak Lang-Trotter conjecture, see \cite{Lang} and \cite[Conjecture 1.3]{Katz2}, asserts that it is only congruence obstructions
that prevent an integer $a$ from being a trace of Frobenius $a_p(E)$ of $E$ for infinitely many primes $p$ of good reduction of $E$. In \cite{David}, it was shown that if $E$ has a rational $\ell$-isogeny, $\ell\ne 11$, the
number of primes $p$ such that $a_p(E) \equiv r \mod \ell$ is finite, for some $r$ modulo $\ell$, if and only if $E$ has rational
$\ell$-torsion over the cyclotomic field $\Q(\zeta_{\ell})$.
In our work, for elliptic curves $E$ whose torsion subgroups grow over a quadratic field $K$, we provide other integers $N$, namely divisors of $|E(K)_{\tors}|$, such that congruences modulo $N$ obstruct certain integers from being the trace of Frobenius $a_p(E)$ for infinitely many primes $p$ of good reduction.     

Kubert, \cite{Kubert}, parametrized elliptic curves defined over the rational field $\Q$ possessing nontrivial torsion subgroups. In other words, it was proved that any such elliptic curve is a rational specialization of an elliptic surface defined over $\Q$. In \cite{Rabarison}, elliptic curves with nontrivial torsion subgroups over quadratic fields were parametrized in a similar fashion. We present two explicit examples of elliptic curves over $\Q(t)$ with trivial rational torsion in which a positive proportion of their smooth fibers satisfy the following property. The orders of the reductions of any of these fibers are divisible by a certain integer modulo primes lying in a set of primes of positive density strictly less than $1$.  

\subsection*{Acknowledgments} We would like to thank G\"okhan Soydan, Mohamed Wafik and Tu\v{g}ba Yesin for several comments and suggestions. This work was initiated when A. Pajaziti was a master's student at Sabanc{\i} University under the supervision of M. Sadek. M. Sadek is supported by The Scientific and Technological Research Council of Turkey, T\"{U}B\.{I}TAK; research grant: ARDEB 1001/122F312 and BAGEP Award of the Science Academy, Turkey.

\section{Density $1$ known results} 
\label{sec:density 1}

Throughout this work, $K$ will denote a number field with ring of integers $R$. If $\mathfrak{p}$ is a prime ideal in $R$, then we write $k_\mathfrak{p}$ for the residue field $R/\mathfrak{p}$, and we set $p=\Char k_{\p}$. We denote the corresponding discrete $\p$-valuation by $\nu_{\p}$.  

Given an elliptic curve $E$ defined over $K$, we write $S_E$ for the set of primes of bad reduction of $E$, and $\widetilde{E}_{\p}$ for the reduction of $E$ mod $\p$.

Let $m\geq 2$. The following result classifies all elliptic curves $E$ over $K$ such that 
$|\widetilde{E}_{\p}(k_{\p})|\equiv 0 \mod{m}$ for primes $\mathfrak{p}$ in $K$ of density $1$, see \cite[IV-6]{Serre1} or \cite[Theorem 2]{Katz}.
\begin{Theorem} \label{thm:density1}
 Let $m\ge 2$ be an integer. Let $E$ be an elliptic curve defined over $K$.  The following statements are equivalent: 
 \begin{itemize}
 \item[a)] $|\widetilde{E}_{\p}(k_{\p})| \equiv 0 \mod{m}$ for a set of primes $\p$ of density $1$ in $K$.
 \item[b)] There exists an elliptic curve $E'$ over $K$ such that:
 \begin{itemize}
 \item[i)] $E$ is $K$-isogenous to $E'$; and
 \item[ii)] $|E'(K)_{\tors}|\equiv 0\mod{m}$. 
 \end{itemize} 
 \end{itemize}
\end{Theorem}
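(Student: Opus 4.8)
The plan is to prove the two implications separately; the implication (b) $\Rightarrow$ (a) is elementary, while (a) $\Rightarrow$ (b) carries all the weight. For (b) $\Rightarrow$ (a), suppose $E'$ is $K$-isogenous to $E$ with $m\mid |E'(K)_{\tors}|$. As noted in the introduction, $E'(K)_{\tors}$ embeds into $\widetilde{E'}_{\p}(k_{\p})$ for every prime $\p\notin S_{E'}$, so $m\mid |E'(K)_{\tors}|$ divides $|\widetilde{E'}_{\p}(k_{\p})|$ for all such $\p$. A $K$-isogeny $E\to E'$ reduces to an isogeny of the reductions at all but finitely many primes, and isogenous elliptic curves over a finite field have equal characteristic polynomials of Frobenius, hence the same number of rational points. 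Thus $|\widetilde{E}_{\p}(k_{\p})|=|\widetilde{E'}_{\p}(k_{\p})|$ for all but finitely many $\p$, and $m\mid |\widetilde{E}_{\p}(k_{\p})|$ on a set of density $1$.

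For (a) $\Rightarrow$ (b), write $G:=G(E,m)\subseteq \GL(2,\Z/m\Z)$ for the image of the mod $m$ Galois representation as in Lemma~\ref{Lem:1}. By the computation in that proof, for $\p\notin S_E$ with $\p\nmid (m)$ we have $|\widetilde{E}_{\p}(k_{\p})|\equiv 1+\det(\sigma)-\Tr(\sigma)=\det(\sigma-I)\pmod m$ where $\sigma=\operatorname{Frob}_{\p}$. The quantity $\det(\sigma-I)$ depends only on the conjugacy class of $\sigma$, and by the Chebotarev density theorem each class in $G$ is realized by $\operatorname{Frob}_{\p}$ for a set of primes of positive density. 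Hence if some $\sigma_0\in G$ had $\det(\sigma_0-I)\not\equiv 0\pmod m$, the positive-density set of $\p$ with $\operatorname{Frob}_{\p}$ conjugate to $\sigma_0$ would all violate (a), contradicting density $1$. Therefore $\det(\sigma-I)\equiv 0\pmod m$ for every $\sigma\in G$; equivalently, every $\sigma$ fixes a nonzero vector of $E[m]\isom(\Z/m\Z)^2$.

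It remains to convert this fixed-vector condition into rational torsion after isogeny. I would first reduce to a prime power $\ell^e\,\|\,m$ by the Chinese Remainder Theorem, prove that $E$ is isogenous to a curve with $\ell^e$ dividing its rational torsion, and reassemble over the primes dividing $m$. At the prime level $\ell$, where $\Z/\ell\Z$ is a field, the condition that every $\sigma\in G$ has eigenvalue $1$ forces $G$ to be reducible: the elements of $G\cap\SL_2(\F_\ell)$ are unipotent, so by Kolchin they share a fixed line which, being canonical, is $G$-stable, while if $G\cap\SL_2(\F_\ell)$ is trivial then $\det$ is injective, $G$ is cyclic, and a single matrix with an eigenvector is triangulable. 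Writing $\chi,\chi'$ for the characters through which $G$ acts on a stable line $L\subseteq E[\ell]$ and on $E[\ell]/L$, the eigenvalue-$1$ condition places each $\sigma$ in $\ker\chi\cup\ker\chi'$; since a group is never the union of two proper subgroups, $\chi=1$ or $\chi'=1$. In the first case $E$ has a rational $\ell$-torsion point, and in the second the quotient isogeny $E\to E/L$ produces one on an isogenous curve. To climb from $\ell$ to $\ell^e$ one iterates over the tower $E[\ell]\subset E[\ell^2]\subset\cdots$, at each stage replacing $E$ by an isogenous curve carrying the torsion found so far and reading off the next layer from $\det(\sigma-I)\equiv 0\pmod{\ell^e}$; combining the prime-by-prime isogenies yields a single $K$-isogenous $E'$ with $m\mid |E'(K)_{\tors}|$.

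The main obstacle is precisely this last, group-theoretic step, and it is sharpest at prime powers. Over the field $\Z/\ell\Z$ the union-of-subgroups argument cleanly produces a trivial character, but over $\Z/\ell^e\Z$ the relation $\det(\sigma-I)=(\chi(\sigma)-1)(\chi'(\sigma)-1)\equiv 0\pmod{\ell^e}$ no longer forces either factor to vanish identically, so one cannot split off the full $\ell^e$-torsion in a single stroke. A careful induction on the power of $\ell$ is needed, tracking how the Galois image sits inside the stabilizer of the torsion already produced and invoking the structure of reducible subgroups of $\GL(2,\Z/\ell^e\Z)$. Throughout, the primes dividing $m$ or lying in $S_E$ form a set of density $0$ and are harmless.
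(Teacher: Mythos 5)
The first thing to note is that the paper does not prove this theorem at all: it is quoted with the attribution ``\cite[IV-6]{Serre1} or \cite[Theorem 2]{Katz}'', so your attempt must be measured against those sources. Your (b) $\Rightarrow$ (a) direction is correct and standard: torsion injects into the reduction at good primes, a $K$-isogeny reduces to an isogeny at all but finitely many $\p$, and isogenous curves over a finite field have the same number of points. Your Chebotarev reduction of (a) to the statement that $\det(\sigma-I)\equiv 0 \bmod m$ for \emph{every} $\sigma\in G(E,m)$ is also correct (each conjugacy class is hit by Frobenius on a set of positive density, which cannot lie in the density-zero exceptional set, and $\det(\sigma-I)=1+\det\sigma-\Tr\sigma$ is a class function). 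Your treatment of the prime case $m=\ell$ — Kolchin applied to $G\cap\operatorname{SL}_2(\mathbb{F}_\ell)$, whose elements are unipotent, normality giving a $G$-stable line $L$, the fact that a group is never the union of two proper subgroups forcing $\chi=1$ or $\chi'=1$, and passing to $E/L$ in the second case — is essentially Serre's argument from IV-6 and is sound.

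The genuine gap is exactly where you say ``a careful induction on the power of $\ell$ is needed'': that sentence is a placeholder for the actual content of Katz's theorem, and nothing in your sketch supplies it. For $m=\ell^e$, $e\ge 2$, two things fail simultaneously. First, reducibility itself is no longer automatic: $\Z/\ell^e\Z$ is not a field, Kolchin does not apply, and you must \emph{prove} the existence of a $G$-stable free rank-one direct summand of $E[\ell^e]$ before the characters $\chi,\chi'$ are even defined. Second, and more seriously, even granting such a summand, the condition $(\chi(\sigma)-1)(\chi'(\sigma)-1)\equiv 0\bmod{\ell^e}$ genuinely does not force either character to be trivial: different elements $\sigma$ may satisfy $\chi(\sigma)\equiv 1 \bmod{\ell^a}$ and $\chi'(\sigma)\equiv 1\bmod{\ell^{e-a}}$ with varying $a$, and there exist subgroups of $\GL(2,\Z/\ell^e\Z)$ meeting the determinant condition with no fixed vector of order $\ell^e$. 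This is precisely why the isogeny in (b) is unavoidable and why Katz's proof must track the image of Galois against a chain of lattices, choosing the isogeny so as to rebalance the depths at which the two characters degenerate; your proposal names neither the inductive statement, nor which isogeny to take at each stage, nor why the process terminates with a rational point of order $\ell^e$. Since the prime-power case is the entire content of the theorem beyond Serre's prime-level argument, the proposal as written establishes only the easy direction and the case of squarefree $m$. (Your final reassembly over the primes dividing $m$ is fine: an $\ell$-power isogeny is an isomorphism on prime-to-$\ell$ torsion, so the constructions at distinct primes do not interfere.)
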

If $\beta$ is an integer, $0\le \beta\le m-1$, such that $|\widetilde{E}_{\p}(k_{\p}) |\equiv \beta \mod{m}$ for primes $\p$ of density $1$ in $K$, then
 this is equivalent to $1+\det(\sigma)-\Tr(\sigma)\equiv \beta \mod{m}$ for all $\sigma \in G(E,m)$, where $G(E, m)\subset \GL(2,\mathbb{Z}/m\mathbb{Z})$ is the subgroup defined by the action of the absolute Galois group $\Gal(\overline{K}/K)$ on the set of $m$-torsion points $E[m]$ of $E$, \cite[Theorem 2.3]{Illusie}. Taking $\sigma$ to be the identity matrix yields that $\beta\equiv 0\mod m$.
 Setting 
{\footnotesize\[\mathcal{S}_d=\{m: \text{there is an elliptic curve $E$ defined over a number field } K \text{ with }
[K:\mathbb{Q}]=d \text{ and }  |E(K)_{\tors}|\equiv 0\mod m\},\]
}
one sees that 
\begin{eqnarray*}
    \mathcal{S}_1&=&\{2,3,4,5,6,7,8,9,10,12,14,16\},\\
    \mathcal{S}_2&=&\{2,3,4,5,6,7,8,9,10,11,12,13,14,15,16,18,20,24\},\\
    \mathcal{S}_3&=&\{2,3,4,5,6,7,8,9,10,11,12,13,15,18,20,21,24,28\},
\end{eqnarray*}
see \cite{Mazur1} for $d=1$, \cite{Kenku} for $d=2$, and \cite{Sporadic} for $d=3$. In Theorem \ref{thm:density1}, $m\in S_d$ when $[K:\Q]=d$, $d=1,2,3$.
Fixing an integer $m\ge 2$, it follows that if $E$ is not $K$-isogenous to an elliptic curve $E'$ with $|E'(K)_{\tors}|\equiv 0\mod m$, then there are at least two possible values $\beta_i \mod{m}$, $i=1,2$, such that $|\widetilde{E}_{\p}(k_{\p})|\equiv \beta _i \mod{m}$ for all $\mathfrak{p}$ in a set $S_i$, $i=1,2$, of primes of positive density.
In particular, if $E$ is not $K$-isogenous to an elliptic curve whose torsion subgroup has even order, then $|\widetilde{E}_{\p}(k_{\p})|$ is even, respectively odd, for a set of primes of positive density $\delta_1$, respectively $\delta_2$; and $\delta_1+\delta_2=1$.

 Let $S$ be a set of primes in $K$. Define 
$\gcd_{\p\in S} |\widetilde{E}_{\p}(k_{\p})|$ to be the greatest common divisor
of $|\widetilde{E}_{\p}(k_{\p})|$ where $\p$ runs over $S$.  
One has the following result.
\begin{Theorem}\cite[Theorem 2 (bis)]{Katz}
\label{thm:Katz}
Let $E$ be an elliptic curve over a number field $K$. Let $S$ be any set
of primes of $K$ of density $1$ which consists entirely of primes $\p$ at which $E$ has
good reduction and whose ramification indices $e_{\p}$ satisfy $e_{\p} < p-1$ (e.g. $S$
is the set of all odd unramified primes of good reduction for $E$). Then we have
$$\gcd_{\p\in S} |\widetilde{E}_{\p}(k_{\p})|=\sup\{|E'(K)_{\tors}|: E'\textrm{ is $K$-isogenous to }E\}.$$
\end{Theorem}
If $[K:\Q]=d$, $d=1,2,3$, it follows that $\gcd_{\p\in S} |\widetilde{E}_{\p}(k_{\p})|$ is an integer in $\mathcal S_d \cup\{1\}$ when $S$ is a set of density $1$. When $S$ is the set of all rational primes, the reader may consult \cite{PS}.

In this work, we will be concerned with $\gcd_{\p\in S} |\widetilde{E}_{\p}(k_{\p})|$ when the set $S$ has positive density strictly less than $1$.

\section{Local results over quadratic fields}
In this section, we discuss the relation between the size of the reduction of an elliptic curve $E$ modulo a rational prime $p$ of good reduction of $E$ and the size of the reduction of $E$ modulo a prime that lies above $p$.

Let $K$ be a number field of degree $d$ with ring of integers $\OK$. Let $p$ be a rational prime. If the prime ideal factorization of $p$ in $\OK$ is given by $p=\prod \p_i^{e(\p_i|p)}$, we call $e(\p_i|p)$ the ramification index of $\p_i$ over $p$,  and $f(\p_i|p)=[k_{\p_i}:\Fp]$ the inertial degree of $\p_i$ over $p$,  where $\sum_ie(\p_i|p)f(\p_i|p)=d$. If $e(\p_i|p)>1$ for some $i$, then $p$ is said to ramify in $K$, otherwise $p$ is said to be unramified in $K$. It is known that there are only finitely many rational primes that ramify in a number field $K$, \cite[Chapter 3]{Marcus}.  

We will write $E^d$ for the quadratic twist of $E$ by $d$, where $d$ is an element in the base field. 
\begin{Proposition}
\label{unramified} Let $E$ be an elliptic curve defined over $\Q$. Let $\p$ be a prime in $K$ of good reduction of $E$ over $K$. Assume that $\p\mid p$ where $p$ is unramified in $K$. The following hold.
\begin{itemize}
\item[i)] If $f(\p|p)=1$, then $ \widetilde{E}_{\p}(k_{\p})\isom \widetilde{E}_p(\Fp)$. 
\item[ii)] If $f(\p|p)=2$, then $| \widetilde{E}_{\p}(k_{\p})|=|\widetilde{E}_p(\mathbb F_p)| \cdot |\widetilde{E}_p^d(\mathbb F_p)|$, where $d\in\Fp$ is a nonsquare.  
\item[iii)] If $f(\p|p)=3$, then $| \widetilde{E}_{\p}(k_{\p})|=|\widetilde{E}_p(\mathbb F_p)| \left(p^2-p+1+(p+1)|\widetilde{E}_p^d(\mathbb F_p)|-|\widetilde{E}_p(\mathbb F_p)|\cdot |\widetilde{E}_p^d(\mathbb F_p)|\right)$, where $d\in\Fp$ is a nonsquare.  
\end{itemize}
\end{Proposition}
\begin{Proof}
i) Since $p$ is unramified in $K$ and $f(\p|p)=1$, it follows that $|k_{\p}|=p$, hence $k_{\p}\isom \Fp$. 

For ii), if $f(\p|p)=2$, then $k_{\p}\isom \mathbb{F}_{p^2}$. We recall that 
$|\widetilde{E}_{\p}(k_{\p})|= 1+p^2-a_\mathfrak{p}(E)$, where $a_\mathfrak{p}(E)$ is the trace of Frobenius of $E$ at $\mathfrak{p}$ and $N(\mathfrak{p})=p^2$. If $E$ is defined over $\Q$ has good reduction at a rational prime $p$, then one may use the following recurrence 
$$a_{n+2}(E)=a_1(E) a_{n+1}(E)-p a_n(E), \text{ for all } n\geq 0$$
where $a_0=2$ and $a_1=1+q- |\widetilde E(\mathbb F_q)|$ to compute the trace of Frobenius $a_n(E)$ of $E$ over extensions $\mathbb F_q$, $q=p^n$, see \cite[Chapter V, Exercise 5.13]{silverman}. It follows that 
\begin{eqnarray*}
a_\mathfrak{p}(E)= (p+1-|\widetilde{E}_p(\Fp)|)^2-2p=p^2+1-2(p+1)|\widetilde{E}_p(\Fp)|+|\widetilde{E}_p(\Fp)|^2.
\end{eqnarray*}
It follows that 
\begin{eqnarray*}
|\widetilde{E}_{\p}(k_{\p})|&=&|\widetilde{E}_p(\Fp)|\left(2p+2-|\widetilde{E}_p(\Fp)|\right).
\end{eqnarray*}
We recall that for $d\in\Fp$, if $d$ is not a square, then one has $\widetilde{E}_p^d(\Fp)=2p+2-\widetilde{E}_p(\Fp)$, see
\cite[Proposition 3.21]{Schmitt}.
A similar argument yields iii).

In general, if $E_n=|\widetilde{E}_{\p}(k_{\p})|$ when  $f(\p|p)=n$, then substituting in the recurrence above, one obtains $$E_{n+2}=E_{n+1} + p\cdot E_{n+1} - p\cdot E_n  +|\widetilde{E}_p(\Fp)| (1+ p^{n +1} - E_{n+1}  )).$$ 
\end{Proof}
In fact, a simple induction argument using Proposition \ref{unramified} iii) implies the straight forward result $|\widetilde{E}_{\p}(k_{\p})|\equiv 0\mod |\widetilde{E}_p(\Fp)|$.
The followings are direct consequences of Proposition \ref{unramified}.
\begin{Corollary}
\label{cor1}
Let $E$ be an elliptic curve defined over $\Q$. Let $\p$ be a prime in $K$ of good reduction of $E$ over $K$. Assume that $\p\mid p$ where $p$ is unramified in $K$ and $f(\p|p)=2$. If $\nu_2(|\widetilde{E}_p(\mathbb F_p)|)=r\ge 1 $, then $\nu_2(| \widetilde{E}_{\p}(k_{\p})|) \ge r+1$; whereas if $\nu_2(|\widetilde{E}_p(\mathbb F_p)|)=0 $, then $\nu_2(| \widetilde{E}_{\p}(k_{\p})|) =0$.
\end{Corollary}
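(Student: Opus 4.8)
The plan is to deduce the corollary directly from part ii) of Proposition~\ref{unramified}. Writing $n = |\widetilde{E}_p(\mathbb{F}_p)|$ and $n' = |\widetilde{E}_p^d(\mathbb{F}_p)|$ for a nonsquare $d \in \Fp$, that proposition yields $|\widetilde{E}_\p(k_\p)| = n \cdot n'$, and hence $\nu_2(|\widetilde{E}_\p(k_\p)|) = \nu_2(n) + \nu_2(n')$. So everything reduces to controlling the $2$-adic valuation of the second factor $n'$ in terms of that of the first factor $n$.

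The single key input is the parity relation coming from the twist formula established inside the proof of Proposition~\ref{unramified}, namely $n' = 2p + 2 - n$. This gives $n + n' = 2(p+1)$, an even integer, so that $n$ and $n'$ are always of the same parity. From here the two assertions follow by a short case analysis. If $\nu_2(n) = r \ge 1$, then $n$ is even, which forces $n'$ to be even as well, so $\nu_2(n') \ge 1$ and therefore $\nu_2(|\widetilde{E}_\p(k_\p)|) = r + \nu_2(n') \ge r+1$. If instead $\nu_2(n) = 0$, then $n$ is odd, which forces $n'$ odd, so $\nu_2(n') = 0$ and the product $n \cdot n'$ remains odd, giving $\nu_2(|\widetilde{E}_\p(k_\p)|) = 0$.

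Since the corollary is stated as a direct consequence of Proposition~\ref{unramified}, there is no genuine obstacle to overcome here; the entire content is the observation that $n$ and $n'$ share the same parity, which is immediate from $n + n' = 2(p+1)$. The only point demanding a little care is to invoke the correct expression $n' = 2p + 2 - n$ supplied by Proposition~\ref{unramified}, rather than recomputing the trace of Frobenius over $\mathbb{F}_{p^2}$ from scratch; once that identity is in hand, both claims are forced.
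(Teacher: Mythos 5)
Your proof is correct and matches the paper's intended argument: the corollary is stated there as a direct consequence of Proposition \ref{unramified} ii), i.e., of the factorization $|\widetilde{E}_{\p}(k_{\p})|=n(2p+2-n)$ with $n=|\widetilde{E}_p(\mathbb F_p)|$, and your parity observation $n+(2p+2-n)=2(p+1)$ is exactly the step that makes both claims immediate. No gaps.
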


\begin{Corollary}\label{halfofprimes}
Let $E$ be an elliptic curve defined over $\Q$. Let $p$ be a prime of good reduction  of $E$. Let $K=\mathbb Q(\sqrt{d})$ where $d$ is a square free integer, and $\mathfrak{ p}\mid p$ be a prime in $K$. Then,
$$|\widetilde{E}_{\p}(k_{\p})|=\begin{cases}
|\widetilde{E}_p(\mathbb F_p)| & \text{ if $\left(\frac{d}{p}\right)=1,\,p\ne 2$ } \\
|\widetilde{E}_p(\mathbb F_p)| \cdot| \widetilde{E}_p^d(\mathbb F_p)| & \text{ if $\left(\frac{d}{p}\right)=-1,\, p\ne 2$, } 
\end{cases}$$
where $\left(\frac{d}{p}\right)$ is the Legendre symbol of $d$ modulo $p$.
\end{Corollary}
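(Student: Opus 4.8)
The plan is to reduce the statement to Proposition \ref{unramified} by invoking the classical law describing how an odd prime factors in a quadratic field. First I would observe that the hypotheses $\left(\frac{d}{p}\right)=\pm 1$ together with $p\neq 2$ force $p\nmid d$, so that $p$ is unramified in $K=\Q(\sqrt d)$: the only odd primes ramifying in $\Q(\sqrt d)$ are those dividing the square-free integer $d$, and for such primes the Legendre symbol vanishes. Since $p\neq 2$, Kummer's theorem applies to the order $\Z[\sqrt d]$ (whose index in the ring of integers divides $2$), so the factorization of $p$ matches that of $x^2-d$ modulo $p$. Hence Proposition \ref{unramified} is applicable, and it only remains to record the inertial degree $f(\p\mid p)$ in each case.

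In the split case $\left(\frac{d}{p}\right)=1$ the polynomial $x^2-d$ has a root modulo $p$, so $p$ splits in $K$ and every prime $\p\mid p$ has $f(\p\mid p)=1$, giving $k_{\p}\cong\Fp$. Proposition \ref{unramified}(i) then yields $\widetilde E_{\p}(k_{\p})\cong \widetilde E_p(\Fp)$, and in particular the two orders coincide. In the inert case $\left(\frac{d}{p}\right)=-1$ the polynomial $x^2-d$ is irreducible modulo $p$, so $p$ remains prime in $K$ and the unique $\p\mid p$ has $f(\p\mid p)=2$, that is $k_{\p}\cong\mathbb F_{p^2}$. Proposition \ref{unramified}(ii) then gives $|\widetilde E_{\p}(k_{\p})|=|\widetilde E_p(\Fp)|\cdot|\widetilde E_p^{d'}(\Fp)|$ for any nonsquare $d'\in\Fp$.

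The one point requiring care -- and the only place where anything beyond bookkeeping enters -- is that Proposition \ref{unramified}(ii) is phrased with an \emph{arbitrary} nonsquare $d'\in\Fp$, whereas the corollary uses the twist by the fixed integer $d$ defining $K$. I would close this gap by noting that $\left(\frac{d}{p}\right)=-1$ says precisely that the reduction of $d$ is a nonsquare in $\Fp$, and that any two elements of $\Fp^{\times}$ that are both nonsquares differ by a square. Since twisting an elliptic curve by a square leaves its $\Fp$-isomorphism class unchanged, the reduction $\widetilde E_p^{d}$ is $\Fp$-isomorphic to $\widetilde E_p^{d'}$, whence $|\widetilde E_p^{d}(\Fp)|=|\widetilde E_p^{d'}(\Fp)|$. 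Substituting this identification into the inert case completes the argument. I expect no genuine obstacle: the whole proof is the translation of the quadratic splitting law into the hypotheses of Proposition \ref{unramified}, with this twist-invariance remark being the only substantive observation.
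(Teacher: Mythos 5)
Your proposal is correct and follows essentially the same route as the paper, which likewise deduces the corollary from Proposition \ref{unramified} by noting that an odd prime $p$ splits in $K$ when $\left(\frac{d}{p}\right)=1$ and is inert when $\left(\frac{d}{p}\right)=-1$. Your additional remark that any two nonsquares in $\Fp^{\times}$ differ by a square, so the twist by $d$ agrees with the twist by any nonsquare $d'$, is a careful clarification of a point the paper leaves implicit, but it does not change the argument.
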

\begin{Proof}
This follows from Proposition \ref{unramified} by observing that $p\ne 2$ splits in $K$ if $\left(\frac{d}{p}\right)=1$, 
  whereas $p\ne 2$ is inert in $K$ if $\left(\frac{d}{p}\right)=-1$.
\end{Proof}

\section{Congruence classes of orders of reduction}
In this section, we study the link between the growth of the torsion of an elliptic curve $E$ over $\Q$ under a base change over a number field of small degree and the congruence classes of orders of reduction of $E$ modulo rational primes.

 Theorem \ref{thm:density1} asserts that the statement $E$ is 
$\Q$-isogenous to an elliptic curve $E'$ with $|E'(\Q)_{\tors}|\equiv 0\mod m$ is equivalent to $|\widetilde{E}_p(\Fp)|\equiv 0\mod{m}$, $m\ge2$, for all primes $p$ in a set of primes of density $1$. The following result shows that if $E$ is not $\Q$-isogenous to an elliptic curve whose torsion order is divisible by $m$, then the set of primes $p$ for which $|\widetilde{E}_p(\Fp)|\equiv 0\mod{m}$ is still of positive density. We remark that the result is well-known among experts, but we provide the proof for self-containment.
\begin{Proposition}\label{prop:densitydivisibility}
Let $E$ be an elliptic curve over $\Q$ and $m>1$ be an integer such that 
$E$ is not $\Q$-isogenous to an elliptic curve $E'$ with $|E'(\Q)_{\tors}|\equiv 0\mod m.$ Then $|\widetilde{E}_p(\Fp)|\equiv 0\mod{m}$ for all primes $p$ in a set $S$ of primes of positive density $\delta<1$.
\end{Proposition}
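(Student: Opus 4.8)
The plan is to translate the divisibility condition into a statement about Frobenius conjugacy classes in the mod-$m$ Galois representation and then invoke the Chebotarev density theorem. First I would recall, exactly as in the proof of Lemma \ref{Lem:1}, that for every prime $p$ of good reduction with $p\nmid m$ one has the congruence
\[
|\widetilde{E}_p(\Fp)|\equiv 1+\det(\sigma_p)-\Tr(\sigma_p)\mod m,
\]
where $\sigma_p=\mathrm{Frob}_p$ is the Frobenius class of $p$ in $G(E,m)\cong\Gal(\Q(E[m])/\Q)\subset\GL(2,\Z/m\Z)$ (see \cite[Theorem 2.3]{Illusie}). Hence $|\widetilde{E}_p(\Fp)|\equiv 0\mod m$ holds precisely when $\sigma_p$ lands in the subset $T=\{\sigma\in G(E,m):1+\det(\sigma)-\Tr(\sigma)\equiv 0\mod m\}$. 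Since $\det$ and $\Tr$ are invariant under conjugation, $T$ is a union of conjugacy classes of $G(E,m)$.

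Next I would apply the Chebotarev density theorem to the Galois extension $\Q(E[m])/\Q$. Discarding the finitely many primes that are bad for $E$, divide $m$, or ramify in $\Q(E[m])$ — none of which affects natural density — the set $S$ of primes $p$ with $|\widetilde{E}_p(\Fp)|\equiv 0\mod m$ coincides with the set of unramified primes whose Frobenius class lies in $T$, and therefore has density
\[
\delta=\frac{|T|}{|G(E,m)|}.
\]

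For positivity, observe that the identity matrix $I\in G(E,m)$ satisfies $1+\det(I)-\Tr(I)=1+1-2=0$, so $I\in T$; thus $T\neq\emptyset$ and $\delta\ge 1/|G(E,m)|>0$. For the strict upper bound $\delta<1$ I would argue by contradiction: were $T=G(E,m)$, then $1+\det(\sigma)-\Tr(\sigma)\equiv 0\mod m$ for every $\sigma\in G(E,m)$, which by the equivalence underlying Lemma \ref{Lem:1} forces $|\widetilde{E}_p(\Fp)|\equiv 0\mod m$ for all good primes $p\nmid m$, a set of density $1$. Theorem \ref{thm:density1} would then yield an elliptic curve $E'$ that is $\Q$-isogenous to $E$ with $|E'(\Q)_{\tors}|\equiv 0\mod m$, contradicting the hypothesis. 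Hence $T\subsetneq G(E,m)$, so the complement $G(E,m)\setminus T$ is a nonempty union of conjugacy classes of positive Chebotarev density, and therefore $\delta<1$.

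The only routine verifications are the exclusion of the finitely many exceptional primes and the conjugation-invariance of $T$; the sole genuine point is recognizing that the hypothesis is exactly the obstruction to $T=G(E,m)$, so that the substance of the argument is already packaged in Theorem \ref{thm:density1} and no new computation is required.
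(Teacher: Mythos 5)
Your proof is correct, and it rests on the same pillar as the paper's --- Chebotarev applied to the division field $\Q(E[m])$ --- but the mechanism is genuinely different and yields more. The paper restricts attention to the primes that split completely in $\Q(E[m])$: by Chebotarev these have density at least $1/|\Gal(\Q(E[m])/\Q)|$, and since $E[m]$ lies in $E(\Q(E[m]))_{\tors}$ while Proposition \ref{unramified} i) identifies $\widetilde{E}_{\p}(k_{\p})$ with $\widetilde{E}_p(\Fp)$ at such primes, the divisibility $m\mid|\widetilde{E}_p(\Fp)|$ follows; this gives a lower bound on the density and nothing else. You instead work with the full image $G(E,m)$ through the congruence $|\widetilde{E}_p(\Fp)|\equiv 1+\det(\sigma_p)-\Tr(\sigma_p)\bmod m$, the same identity the paper already invokes in Lemma \ref{Lem:1}. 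This buys you three things the paper's proof does not make explicit: the conjugation-invariant set $T$ shows that $S$ actually \emph{has} a natural density and computes it exactly as $\delta=|T|/|G(E,m)|$; your positivity argument (the identity matrix lies in $T$) recovers the paper's bound, since the identity class corresponds precisely to the completely split primes; and you prove the claimed inequality $\delta<1$, by observing that $T=G(E,m)$ would force divisibility on a density-one set and hence, via Theorem \ref{thm:density1}, produce a $\Q$-isogenous curve with torsion order divisible by $m$, contrary to hypothesis. The paper's proof, as written, establishes only the positivity half of the statement and leaves $\delta<1$ implicit (it follows exactly as in your contradiction argument), so your version covers the full assertion of the proposition more completely, at the modest cost of the routine bookkeeping about excluded primes and conjugacy-invariance that you correctly flag.
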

\begin{Proof}
One considers the Galois number field $F=\Q(E[m])$ obtained by adjoining the $x$- and $y$- coordinates of the $m$-torsion points of $E$ to $\Q$, where the $x$-coordinates are the roots of the $m$-th division polynomial $\psi_m$ of $E$. According to Chebotarev Density Theorem, the primes that split completely in $F$ are of density at least $1/|\Gal(F/\Q)|$. Since $|E(F)_{\tors}|\equiv 0\mod m$, Proposition \ref{unramified} i) implies that $|\widetilde{E}_p(\Fp)|\equiv 0\mod{m}$ for a set of primes of density at least $1/|\Gal(F/\Q)|$.
\end{Proof}
We remark that the statement in Proposition \ref{prop:densitydivisibility} is valid if $\Q$ is replaced by any number field $K$.

In this section, we will establish the existence of families of elliptic curves $E$ over $\Q$ together with integers $m$ such that one can explicitly compute the density of primes $p$ for which $|\widetilde{E}_p(\Fp)| \equiv r\mod m$, $0\le r\le m-1$. For this purpose, we focus on elliptic curves whose torsion subgroups grow over quadratic field extensions. 


In a series of papers \cite{Enrique,Enrique2},  the authors answer the following question. Given an elliptic curve over $\Q$ with torsion group $E(\Q)_{\tors}$, considering a base change of $E$ to a quadratic number field $K$, what are the possibilities for $E(K)_{\tors}$? Moreover, they listed the possible number of quadratic fields over which the torsion of $E$ grows together with the groups that are realized as torsion groups of $E$ over these fields. In the following subsection, we write down the results that will be used in this work. 

Let $\Phi$ be the set of possible groups that can appear as the torsion subgroup of an elliptic
curve defined over $\Q$. If $G\in\Phi$, we write $\Phi(d, G)$, $d\ge 2$, for the set of possible groups that can
appear as the torsion subgroup over a number field $K$ of degree $d$ of an elliptic curve
$E$ defined over $\Q$ such that $E(\Q)_{\tors}=G$. The following is \cite[Theorem 1]{Enrique2}.
\begin{Proposition}
\label{prop:En}
Let $G\in\Phi$. The set $\Phi(2,G)$ is described as follows 

\begin{table}[H]
        \begin{tabular}{|l| l|}
\hline
$G$& $\Phi(2,G)$\\
\hline
$\{0\}$ & $\{0\}$, $\Z/3\Z$, $\Z/5\Z$, $\Z/7\Z$, $\Z/9\Z$\\
 \hline 
 $\Z/2\Z$ & $\Z/2\Z$, $\Z/4\Z$, $\Z/6\Z$, $\Z/8\Z$, $\Z/10\Z$, $\Z/12\Z$, $\Z/16\Z$, $\Z/2\Z\times \Z/2\Z$, $\Z/2\Z\times \Z/6\Z$, $\Z/2\Z\times \Z/10\Z$\\
        \hline 
        $\Z/3\Z$ &$\Z/3\Z$, $\Z/15\Z$, $\Z/3\Z\times\Z/3\Z$\\
        \hline
        $\Z/4\Z$ & $\Z/4\Z$, $\Z/8\Z$, $\Z/12\Z$, $\Z/2\Z\times \Z/4\Z$, $\Z/2\Z\times \Z/8\Z$, $\Z/2\Z\times \Z/12\Z$, 
     $\Z/4\Z\times \Z/4\Z$\\
    \hline 
$\Z/5\Z$ & $\Z/5\Z$, $\Z/15\Z$ \\
\hline
$\Z/6\Z$ &$\Z/6\Z$, $\Z/12\Z$, $\Z/2\Z\times \Z/6\Z$, $\Z/3\Z\times \Z/6\Z$\\
 \hline 
 $\Z/7\Z$ & $\Z/7\Z$\\
\hline 
 $\Z/8\Z$ &$\Z/8\Z$, $\Z/16\Z$, $\Z/2\Z\times \Z/8\Z$\\
 \hline
 $\Z/9\Z$ & $\Z/9\Z$\\
 \hline
 $\Z/10\Z$ &$\Z/10\Z$, $\Z/2\Z\times \Z/10\Z$\\
\hline
 $\Z/12\Z$ &$\Z/12\Z$, $\Z/2\Z\times \Z/12\Z$\\
\hline
        $\Z/2\Z\times\Z/2\Z$ &$\Z/2\Z\times\Z/2\Z$, $\Z/2\Z\times \Z/4\Z$, $\Z/2\Z\times \Z/6\Z$, $\Z/2\Z\times \Z/8\Z$, $\Z/2\Z\times \Z/12\Z$\\
         \hline
         $\Z/2\Z\times\Z/4\Z$ &$\Z/2\Z\times\Z/4\Z$, $\Z/2\Z\times \Z/8\Z$, $\Z/4\Z\times \Z/4\Z$\\
    \hline
     $\Z/2\Z\times\Z/6\Z$ &$\Z/2\Z\times\Z/6\Z$, $\Z/2\Z\times \Z/12\Z$\\
     \hline
     $\Z/2\Z\times\Z/8\Z$ &$\Z/2\Z\times\Z/8\Z$\\
     \hline
\end{tabular}\\
\end{table}
\end{Proposition}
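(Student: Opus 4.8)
The plan is to prove this classification by intersecting the two ambient lists that bound it — Mazur's classification of $\Phi$, \cite{Mazur1}, and the Kenku–Momose–Kamienny classification of torsion subgroups over quadratic fields, \cite{Kam,Kenku} — and then to use a Galois-descent argument to decide, for each fixed $G=E(\Q)_{\tors}$, exactly which quadratic torsion groups containing $G$ are genuinely attainable. At the coarsest level, for any quadratic $K$ one has $G=E(\Q)_{\tors}\subseteq E(K)_{\tors}$, and $E(K)_{\tors}$ must lie among the finitely many groups realizable over quadratic fields; this already cuts $\Phi(2,G)$ down to those quadratic-torsion groups that contain $G$. The work of the theorem is to prune this candidate set to the entries displayed and to show that each surviving entry occurs.

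The central tool is the quadratic-twist decomposition of torsion. Fix $K=\Q(\sqrt d)$ with $\Gal(K/\Q)=\langle\sigma\rangle$, and let $\psi\colon E\to E^d$ be the isomorphism defined over $K$ relating $E$ to its quadratic twist $E^d$; it satisfies $\psi\circ\sigma=-\sigma\circ\psi$. For an odd prime $\ell$ the finite group $E(K)[\ell^\infty]$ carries the involution $\sigma$, and since $2$ is invertible modulo every power of $\ell$ one may split each point as $P=\tfrac12(P+\sigma P)+\tfrac12(P-\sigma P)$, giving $E(K)[\ell^\infty]\isom E(K)[\ell^\infty]^{+}\oplus E(K)[\ell^\infty]^{-}$. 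The $+1$-eigenspace is exactly $E(\Q)[\ell^\infty]$, while $\psi$ carries the $-1$-eigenspace isomorphically onto $E^d(\Q)[\ell^\infty]$ (one checks $\sigma(\psi P)=-\psi(\sigma P)=\psi(P)$ when $\sigma P=-P$). Hence for odd $\ell$ one obtains $E(K)[\ell^\infty]\isom E(\Q)[\ell^\infty]\oplus E^d(\Q)[\ell^\infty]$. Since $E^d$ is again an elliptic curve over $\Q$, both factors lie on Mazur's list, so the odd part of any growth is a product of two $\Q$-torsion groups; combined with the Weil-pairing constraint that full $n$-torsion $\isom (\Z/n\Z)^2\subseteq E(K)$ forces $\Q(\zeta_n)\subseteq K$, hence $\varphi(n)\le 2$, this pins down the odd part of $\Phi(2,G)$ and explains both the cyclic growths and the appearance of $\Z/3\Z\times\Z/3\Z$ while excluding, for instance, $\Z/5\Z\times\Z/5\Z$ or any $\ell$-growth with $\ell\ge 11$.

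The prime $\ell=2$ must be treated separately, because the $\pm1$-eigenspace splitting breaks down once $E[2]\subset E(K)$: the two eigenspaces overlap on the $2$-torsion. Here I would analyse $E[2^\infty](K)$ directly, tracking how full $2$-torsion and points of order $4$, $8$, $16$ can be acquired over a single quadratic field, again using that the $2$-power torsion of both $E$ and $E^d$ is bounded by Mazur and that the total group must remain realizable over a quadratic field. This accounts for the borderline rows, in particular the distinction between a cyclic $\Z/n\Z$ growing to $\Z/2n\Z$, to $\Z/2\Z\times\Z/n\Z$, or (with $\Q(i)$ or $\Q(\sqrt{-3})$ in play) to $\Z/4\Z\times\Z/4\Z$.

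Finally, for realizability one must exhibit, for each entry of the table, an explicit $E/\Q$ with $E(\Q)_{\tors}=G$ whose base change to a suitable $K$ attains the claimed group; this is supplied by the Kubert and Rabarison parametrizations, \cite{Kubert,Rabarison}, together with explicit curves, and ultimately by producing quadratic points on the relevant modular curves $X_1(n)$ and $X_1(m,n)$. The main obstacle is the completeness direction: ruling out every unlisted group amounts to showing that these modular curves carry no quadratic points beyond the expected ones, i.e.\ excluding sporadic quadratic points. This genuinely hard input is precisely what the quadratic-field classification and its refinement in \cite{Enrique,Enrique2} provide, and it is the one step that cannot be reduced to the elementary twist bookkeeping above.
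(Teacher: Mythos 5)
You should first be aware that the paper does not prove this proposition at all: it is imported verbatim as \cite[Theorem 1]{Enrique2}, so there is no internal argument to measure your attempt against, and the honest comparison is with the method of the cited source. Judged that way, your architecture is the right one and essentially matches what González-Jiménez--Tornero actually do: the ambient lists of Mazur \cite{Mazur1} and Kenku--Momose--Kamienny \cite{Kam,Kenku}, the eigenspace/twist decomposition $E(\Q)[n]\oplus E^d(\Q)[n]\isom E(K)[n]$ for odd $n$ (which this paper itself records, following \cite{Laska} and \cite{Najman}, in the display after (\ref{eq1})), the Weil-pairing constraint $\Q(\zeta_n)\subseteq K$ limiting full $n$-torsion to $n\in\{2,3,4\}$ over a quadratic field, and Kubert/Rabarison parametrizations \cite{Kubert,Rabarison} for realizability. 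Your verification that $\psi$ carries the $(-1)$-eigenspace to $E^d(\Q)$ is correct, and the decomposition does legitimately dispatch the odd part: e.g.\ it excludes cyclic $11$-growth (both eigenspaces would need $11$-torsion over $\Q$, impossible by Mazur) even though $\Z/11\Z$ does occur for curves genuinely defined over quadratic fields.

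The genuine gap is that the two places where your sketch says ``I would analyse'' or defers to the literature are exactly where the content of the table lives, so as a standalone proof it does not close. First, the $2$-primary analysis is only announced, and the eigenspace method provably breaks there; yet almost every fine-grained entry is a $2$-adic fact: that $\Z/16\Z$ lies in $\Phi(2,\Z/2\Z)$ and $\Phi(2,\Z/8\Z)$ but \emph{not} in $\Phi(2,\Z/4\Z)$, that $\Z/2\Z\times\Z/8\Z$ is terminal, that $G=\Z/9\Z$ never gains a $2$-torsion point (excluding $\Z/18\Z$, which \emph{is} on the Kenku--Momose list), and the trichotomy $\Z/n\Z\rightsquigarrow\Z/2n\Z$ versus $\Z/2\Z\times\Z/n\Z$ versus $\Z/4\Z\times\Z/4\Z$. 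None of this follows from ``bounding both $E$ and $E^d$ by Mazur''; it requires classifying quadratic points on $X_1(16)$, $X_1(18)$, $X_1(2,12)$, etc., and determining which rational curves they sit over. Second, you state candidly that this completeness step is supplied by \cite{Enrique,Enrique2} --- but that is the proposition being proved, so invoking it is circular. In effect your proposal reduces the statement to itself plus correct bookkeeping; that is a fair reconstruction of the proof's skeleton (and of why the paper is entitled to cite it), but it is not a proof.
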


A special case of \cite[Lemma 1.1]{Laska} asserts that if $K=\Q(\sqrt{d})$, where $d$ is a square free integer, then there exist homomorphisms 
\begin{eqnarray}\label{eq1} 
E(\Q)\oplus E^d(\Q)\to E(K),\qquad E(K)\to E(\Q)\oplus E^d(\Q),
\end{eqnarray}
where the kernels and cokernels of both homomorphisms are annihilated by the multiplication by-$2$-map, see \cite[Lemma 1]{Najman}. In particular, if $n$ is an odd integer, then 
\[E(\Q)[n]\oplus E^d(\Q)[n]\isom E(K)[n].\]
We are now in a place to discuss families of elliptic curves for which we can find a positive integer $m\ge 2$ such that the congruence classes of the orders of its reductions are explicitly determined modulo $m$, hence we can compute the density at which each class occurs.  

Given an elliptic curve $E$ defined over a number field $K$ then we define $$M_K(E)=\sup\{|E'(K)_{\tors}|: E'\textrm{ is $\Q$-isogenous to }E\}.$$

\begin{Theorem} \label{Oddorder}
Let $K=\Q(\sqrt{d})$, where $d$ is a square free integer. Let $E$ be an elliptic curve defined over $\mathbb Q$ such that $M_K(E)>M_\Q(E)$.  Assume moreover that $\ell$ is an odd integer such that $ \ell\mid M_K(E)$ but $\ell\nmid M_{\Q}(E)$. If $p\nmid 2d M_K(E)$ is a prime of good reduction of $E$, then
$$ |\widetilde{E}_p(\mathbb F_p)|\equiv \begin{cases}
0 \mod \ell &\text{ if }\left(\frac{d}{p}\right)=1\\
2p+2 \mod \ell &\text{ if } \left(\frac{d}{p}\right)=-1.\\
\end{cases}$$
In particular, the density of primes $p$ such
that $| \widetilde{E}_p(\Fp)| \equiv 0 \mod \ell$ is at least $1/2$ .
\end{Theorem}
\begin{proof}
If the prime $p$ splits in $K$, i.e., $\left(\frac{d}{p}\right)=1$, then it follows by Corollary \ref{halfofprimes} that $|\widetilde{E}_p(\mathbb F_p)|\equiv |\widetilde{E}_{\p}(k_{\p})| \mod \ell$ for every prime $\p$ lying above $p$.  Since $|E'(K)_{\tors}|\equiv 0\mod \ell$ for some elliptic curve $E'$ that is $\Q$-isogenous to $E$, it follows that $|\widetilde{E}_{\p}(k_{\p})|\equiv 0\mod \ell$ for every prime $\p$ of good reduction of $E$ over $K$ that lies above a rational prime that splits in $K$, see Theorem \ref{thm:density1}.

Now we assume that the prime $p$ is inert in $K$, i.e., $\left(\frac{d}{p}\right)=-1$. One knows that $|\widetilde{E}_p(\Fp)|=2p+2-|\widetilde{E}_p^d(\Fp)|$.  Since $\ell$ is odd, the paragraph before the theorem implies that $|E'^d(\Q)_{\tors}|\equiv 0\mod \ell$. For these primes $p$, Theorem \ref{thm:density1} implies that $|\widetilde{E}_p^d(\Fp)|\equiv 0\mod\ell$.  
\end{proof}

We recall that for a prime $p\ge 5$, if the trace of Frobenius $a_p(E)=p+1-|\widetilde{E}_p(\Fp)|$ of $E$ at a prime of good reduction $p$ is such that $a_p(E)=0$, then $p$ is said to be a {\em supersingular} prime for $E$. Theorem \ref{Oddorder} yields the following result of possible congruence values of trace of Frobenius of elliptic curves satisfying the assumptions of Theorem \ref{Oddorder}.
\begin{Corollary} \label{cor:supersingular}
Let $E$, $K$, $d$ and $\ell$ satisfy the hypotheses of Theorem \ref{Oddorder}. Let $E'$ be an elliptic curve $\Q$-isogenous to $E$. Let $p\nmid 2d|E'(K)_{\tors}|$ be a prime of good reduction of $E$. If $p$ is a supersingular prime for $E$, then $p\equiv -1 \mod \ell$.
\end{Corollary}

\begin{Example}
Let $E$ be an elliptic curve defined by $y^2+xy=x^3+x^2-700x+34000$ over $\mathbb Q$. One may check that $E(\mathbb{Q})_{\tors}\cong \mathbb Z/2 \mathbb Z$ and $E(\mathbb Q(\sqrt{5}))_{\tors}\cong \mathbb Z/10 \mathbb Z$. It follows that the torsion subgroup of the quadratic twist $E^5$ is $E^5(\mathbb{Q})_{\tors} \cong \mathbb Z/10 \mathbb Z$. Thus, by Theorem \ref{Oddorder}, one has that
$$|\widetilde{E}_p(\mathbb F_p)|\equiv \begin{cases}
0 \mod{10} &\text{ if } p\equiv 1,4 \mod 5 \\
6 \mod{10} &\text{ if } p\equiv 2 \mod 5 \\
8 \mod{10}&\text{ if } p\equiv 3 \mod 5.
\end{cases}$$

We notice that $|\widetilde{E}_p(\mathbb F_p)|\equiv 0 \mod 10$ for primes $p$ of density $\frac{1}{2}$, $|\widetilde{E}_p(\mathbb F_p)|\equiv 6 \mod{10}$ for primes of density $\frac{1}{4}$, and $|\widetilde{E}_p(\mathbb F_p)|\equiv8 \mod{ 10}$  for primes of density $\frac{1}{4}$.

In addition, if $p$ is a supersingular prime for $E$, then $p\equiv 9\mod{10}$. More precisely, one has the following formula for the trace of Frobenius $a_p(E)=p+1-|\widetilde{E}_p(\Fp)|$, where $p$ is a prime of good reduction of $E$
$$a_p(E)\equiv \begin{cases}
0 \mod{10} &\text{ if } p\equiv 9 \mod 10 \\
2\mod{10} &\text{ if } p\equiv 1, 7 \mod 10 \\
6 \mod{10}&\text{ if } p\equiv 3 \mod 10.
\end{cases}$$
\end{Example}

More generally, one has the following generalized version of Theorem \ref{Oddorder}. The proof is similar to the proof of Theorem \ref{Oddorder}.

\begin{Theorem} \label{Order}
Let $K=\Q(\sqrt{d})$, where $d$ is a square free integer. Let $E$ be an elliptic curve defined over $\mathbb Q$ such that $M_K(E)>M_\Q(E)$. 
If $p\nmid 2d M_K(E)$ is a prime of good reduction of $E$, then
$$ |\widetilde{E}_p(\mathbb F_p)|\equiv \begin{cases}
0 \mod M_K(E) &\text{ if }\left(\frac{d}{p}\right)=1\\
2p+2 \mod M_\Q(E^d) &\text{ if } \left(\frac{d}{p}\right)=-1.\\
\end{cases}$$
In particular, the density of primes $p$ such
that $| \widetilde{E}_p(\Fp)| \equiv 0 \mod M_K(E)$ is at least $1/2$ .
\end{Theorem}




The following example, conjectured in \cite{Sun} and proved in \cite[Theorem 2]{KKP}, displays an elliptic curve over $\Q$ where the congruence classes of orders of reductions of the curve are determined completely modulo the integer $12$ according to the congruence classes of the primes of good reduction when considered modulo $20$. We give an explanation for the occurrence of these congruence classes using the results above. 
\begin{Example}
Let $E: y^2=x^3-12x-11$ be an elliptic curve defined over $\mathbb{Q}$. The elliptic curve $E$ is $\mathbb{Q}$-isogenous to the elliptic curve $E':y^2=x^3 - 372x + 2761$ with torsion $\mathbb{Z}/6\mathbb{Z}$. Therefore, one knows that $|\widetilde{E}_p(\mathbb{F}_p)|\equiv0\mod 6 $ for all primes of good reduction, see Theorem \ref {thm:density1}. If one considers $|\widetilde{E}_p(\mathbb{F}_p)|\mod 12$, then the two possible congruence classes are $0$ and $6$. Moreover, one knows that the congruence $6\mod 12$ must occur with positive density.

Now one considers the curves $E$ and $E'$ over the quadratic extension $K=\Q(\sqrt{5})$. One sees that $E'(K)_{\tors}\isom\Z/2\Z\times\Z/6\Z$. 
According to Theorem \ref{Order}, one has that $|\widetilde{E}_p(\mathbb{F}_p)|\equiv 0\mod 12$ for primes $p\equiv 1,4\mod 5$. For the primes $p\equiv 2,3 \mod 5$, one may have either possibilities for $|\widetilde{E}_p(\mathbb{F}_p)|\mod 12$. For this reason we investigate the divisibility of $|\widetilde{E}_p(\mathbb{F}_p)|$ by $4$ in each case. For $|\widetilde{E}_p(\mathbb{F}_p)|$ to have a point of order $4$, the $2$-torsion point $(-1,0)$ must be divisible by $2$, i.e., there is a point $P\in \widetilde{E}_p(\mathbb{F}_p)$ such that $2P=(-1,0)$. Using the duplication formula on $E$, one knows that the $x$-coordinate of $P$ must be a root of the polynomial $(x^2+2x+10)^2$. The latter is equivalent to $-1$ being a square modulo $p$, i.e., $p\equiv 1\mod 4$.
Therefore, one has 
\begin{equation*}|\widetilde{E}_p(\mathbb{F}_p)|\equiv\begin{cases} 0 \mod 12 &\text{ if } p\equiv 1,9,11,13,17,19 \mod 20\\
6 \mod 12& \text{ if } p\equiv 3,7 \mod 20.
\end{cases}
\end{equation*}
In particular, $|\widetilde{E}_p(\mathbb{F}_p)|\equiv 0\mod 12$ for primes of density $3/4$, whereas 
$|\widetilde{E}_p(\mathbb{F}_p)|\equiv 6\mod 12$ for primes of density $1/4$.
\end{Example}

\begin{Example}
Let $E$ be the elliptic curve defined by $y^2+xy+y=x^3-x^2+47245x-2990253$  over $\mathbb Q$. It is easily seen that $E(\mathbb{Q})_{\tors}\cong \mathbb Z/2 \mathbb Z$ and $E(\mathbb Q(\sqrt{-15}))_{\tors}\cong \mathbb Z/16 \mathbb Z$. Also, the torsion subgroup of the quadratic twist $E^{-15}$ is given by $E^{-15}(\mathbb{Q})_{\tors} \cong \mathbb Z/8 \mathbb Z$. Moreover, if $p\equiv 7,11,13,14 \mod 15$, or equivalently $\left(\frac{-15}{p}\right)=-1$, then $|\widetilde{E}_p^{-15}(\mathbb F_p)|\equiv 0  \mod 8$. It follows that in the latter case $|\widetilde{E}_p(\Fp)|\equiv0,2,4,6 \mod 8$. However, the elliptic curve $E$ is $\Q$-isogenous to the curve $y^2 + xy + y = x^3 - x^2 - 240755x - 26606253$ whose torsion subgroup is isomorphic to $\Z/2\Z\times\Z/2\Z$. This implies that $|\widetilde{E}_p(\Fp)|\equiv 0\mod 4$ for any prime $p$ of good reduction of $E$. Thus,
Theorem \ref{Order} yields that
\begin{equation*}
|\widetilde{E}_p(\mathbb F_p)|\equiv\begin{cases} 
0 \mod 16 & \text{ if } p\equiv 1,2,4,8 \mod 15 \\
0,4,8,12  \mod 16&\text{ if } p\equiv 7,11,13,14 \mod 15. 
\end{cases}
\end{equation*}
\end{Example}

\begin{Example}
Let $E:y^2=x^3+20148x+586096$ be an elliptic curve defined over $\mathbb Q$ with $E(\mathbb{Q})_{\tors}\cong \mathbb Z/2 \mathbb Z$ and $E(\mathbb Q(\sqrt{-6}))_{\tors}\cong \mathbb Z/8 \mathbb Z$. Also, the torsion subgroup of the quadratic twist of $E^{-6}$ is given by $E^{-6}(\mathbb{Q})_{\tors} \cong \mathbb Z/8 \mathbb Z$. A prime $p$ splits in $\mathbb Q(\sqrt{-6})$ if $p\equiv 1,5,7,11 \mod 24$, otherwise $p$ is inert.
Therefore, Theorem \ref{Order} gives that
$$|\widetilde{E}_p(\mathbb F_p)|\equiv \begin{cases}
0 \mod 8& \text{ if }p\equiv 1,5,7,11,19,23 \mod 24 \\
4 \mod 8& \text{ if } p\equiv 13,17 \mod 24.
\end{cases}$$
One notices that $E$ is $\Q$-isogenous to an elliptic curve whose torsion subgroup is isomorphic to $\Z/2\Z\times\Z/2\Z$.
\end{Example}


The following observations can be found in \cite[Proposition 1]{Najman} or \cite{Enrique2}.
\begin{Remark}
\label{rem1}
\begin{itemize}
\item[i)] If $E(\Q)_{\tors}\isom \Z/3\Z$ is such that $E(K)_{\tors}\isom \Z/3\Z\times\Z/3\Z$, then $K=\Q(\sqrt{-3})$. 
\item[ii)] The curves 50a3 and 450b4 are the only elliptic curves $E$ with $E(\Q)_{\tors}\isom\Z/3\Z$ such that there exists $d=5$ and $d=-15$, respectively, with $E^d(\Q)_{\tors}\isom \Z/5\Z$. Moreover, $E^d(\Q)_{\tors}=\{O_E\}$ for any other square free integer $d$. The reader may refer to the table in Appendix for the explicit computations of the congruence classes of $|\widetilde{E}_p(\Fp)|$ modulo $15$ for the curve $E$ defined over $\Q$ and labeled as 50a3.
\end{itemize}
\end{Remark}

\begin{Corollary}
\label{cor:examples}
Let $E$ be an elliptic curve defined over $\Q$.
\begin{itemize}
\item[i)] If $E(\Q)_{\tors}\isom \Z/3\Z$ is such that $E(K)_{\tors}\isom \Z/3\Z\times\Z/3\Z$, then 
$$|\widetilde{E}_p(\mathbb F_p)|\equiv \begin{cases}
0 \mod 9& \text{ if } p\equiv 1 \mod 3 \\
0,3,6 \mod 9& \text{ if } p\equiv 2\mod 3.
\end{cases}$$
\item[ii)] If $E(\Q)_{\tors}\isom \Z/6\Z$ and $\Z/6\Z\subseteq E^d(\Q)_{\tors}$, then $d=-3$. 
\item[iii)] If $E(\Q)_{\tors}\isom \Z/2\Z\times \Z/4\Z$ and $E^d(\Q)_{\tors}\isom \Z/2\Z\times \Z/4\Z$, then $d=-1$. 
\end{itemize}
\end{Corollary}
\begin{Proof}
i) This is Remark \ref{rem1} i) and Theorem \ref{Order}.

ii) When $\left(\frac{d}{p}\right)=-1$, Theorem \ref{Order} implies that $|\widetilde{E}_p(\Fp)|=2p+2+ s |E^d(\Q)_{\tors}| $ for some $s\in\Z$. Considering the last equality modulo 6, one has $p\equiv 2\mod 3$. It follows that $d=-3$.

For iii) Theorem \ref{Order} implies that $|\widetilde{E}_p(\Fp)|=2p+2+ s |E^d(\Q)_{\tors}| $ for some $s\in\Z$ when $\left(\frac{d}{p}\right)=-1$. Now considering the last equality modulo 8, one has $p\equiv 3\mod 4$. It follows that $d=-1$.
\end{Proof}
One remarks that the possibilities for the elliptic curves in ii) and iii) in Corollary \ref{cor:examples} can be found in \cite[Proposition 2, Proposition 3]{Najman}. It is known that the torsion subgroups $\Z/3\Z\times\Z/3\Z$ and $\Z/4\Z\times\Z/4\Z$ occur only over the quadratic fields $\Q(\sqrt{-3})$ and $\Q(\sqrt{-1})$, respectively, see \cite{Kam, Kenku}. Our method gives the result without relying on these facts.  

\section{Divisibility in families of elliptic curves }

Let $E$ be an elliptic curve defined over a number field $K$. In \cite{Kubert}, Kubert gave a parametrization
of all elliptic curves over $K = \Q$ with a nontrivial torsion subgroup. In particular, it was proved that elliptic curves with a fixed nontrivial torsion subgroup  over $\Q$ constitute an elliptic surface over $\Q$. Similarly, a parametrization of elliptic curves defined over a quadratic field
with a certain torsion subgroup was given in \cite{Rabarison}. 

In this section, we aim at constructing families of elliptic curves $E_t$ over $\Q(t)$ together with a set of primes $S$ of positive density strictly less than $1$ such that for all possible values of $t=t_0\in \Z$, one has $|\widetilde{E}_{t=t_0,p}(\Fp)|$ is divisible by a fixed integer for all $p\in S$. 
As an example, one can find the following family of elliptic curves, \cite[Theorem 1]{KKP}.
 
 \begin{Example}\label{Ex}
 Let $p>3$ be a prime and $t$ be an integer such that $ t(9t+4)\not \equiv 0 \mod p$. Let $E$ be an elliptic curve given by
 $$E_t: y^2 = x^3-(6t + 3)x-(3t^2 +6t +2).$$
 Then $| \widetilde{E}_{t,p}(\mathbb{F}_p)|\equiv 0 \mod{3}$.
 \end{Example}
We recall that if $E_{\textbf{a}}$, $\textbf{a}=(a_1,a_2,a_3,a_4,a_6)$, is an elliptic curve defined by a Weierstrass equation $y^2+a_1xy+a_3y=x^3+a_2x^2+a_4x+a_6$ with discriminant $\Delta_{\textbf{a}}$ , then the division polynomial $\psi_m$, $m\ge2$ is odd, is a polynomial in $\mathbb{Z}[a_1,a_2,a_3,a_4,a_6, x]$ of degree $(m^2-1)/2$, see \cite[Chapter III]{silverman} for the definition and explicit description of $\psi_m$. In addition, if $P=(x(P),y(P))\ne O_E$ is a point in $E$, then $P$ is a point in $E[n]$ if and only if $\psi_n(x(P)) =0 $.

 We will need the following standard proposition for our construction. 
\begin{Proposition}\label{parametrization}
Let $p\ne 2$ be a rational prime. Let $K$ be a number field, with ring of integers $R$, and $\mathfrak{p}$ be a prime of $K$ with $\operatorname{char} k_{\mathfrak{p}}\ne 2,3$.

Fix $T\in R$. Let $\textbf{A}=(A_1,A_2,A_3,A_4,A_6)$ be a $k_\mathfrak{p}$-solution of the polynomial equation $G_T(a_1,a_2,a_3,a_4,a_6)\equiv 0 \mod{ \mathfrak{p}}$, where $$G_T(a_1,a_2,a_3,a_4,a_6)=\psi_{p}(a_1,a_2,a_3,a_4,a_6,T)\in k_\mathfrak{p}[a_1,a_2,a_3,a_4,a_6].$$
If the following two conditions hold
\begin{itemize}
\item[i)] $\Delta_{\textbf{A}}\not\equiv 0 \mod \mathfrak{p}$, and
 \item[ii)] $z_T^2+A_1 T z_T+A_3 z_T\equiv T^3+A_2T^2+A_4 T+A_6 \mod{\mathfrak p}$ for some $z_T \in k_\mathfrak{p}$,
\end{itemize}
then the elliptic curve $E_{\textbf{A}}: y^2+A_1xy+A_3y=x^3+A_2x^2+A_4x+A_6$ over $k_\mathfrak{p}$ satisfies
 $|E_{\textbf{A}}(k_\mathfrak{p})|\equiv 0 \mod{ p}$.
\end{Proposition}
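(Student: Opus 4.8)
The plan is to produce, directly from the hypotheses, a $k_\p$-rational point of exact order $p$ on $E_{\textbf{A}}$, and then apply Lagrange's theorem to its cyclic span. First I would translate the three assumptions into geometric statements about the reduced curve. Condition (i), $\Delta_{\textbf{A}}\not\equiv 0\mod\p$, says that the Weierstrass equation $E_{\textbf{A}}\colon y^2+A_1xy+A_3y=x^3+A_2x^2+A_4x+A_6$ is nonsingular over $k_\p$, so $E_{\textbf{A}}$ is a genuine elliptic curve, $E_{\textbf{A}}(k_\p)$ is a finite abelian group, and the division-polynomial calculus is available. Condition (ii) is exactly the assertion that the affine pair $P:=(T,z_T)$ satisfies the Weierstrass equation, so $P\in E_{\textbf{A}}(k_\p)$; being an affine point, $P\ne O_E$. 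Finally, the defining hypothesis $G_T(\textbf{A})\equiv 0\mod\p$ unwinds to $\psi_p(A_1,A_2,A_3,A_4,A_6,T)\equiv 0\mod\p$, that is, $x(P)=T$ is a root of the $p$-th division polynomial of $E_{\textbf{A}}$ over $k_\p$.

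With this dictionary set up, the core of the argument is a single application of the division-polynomial criterion recalled before the proposition: for a point $P\ne O_E$ of $E_{\textbf{A}}$, one has $P\in E_{\textbf{A}}[p]$ if and only if $\psi_p(x(P))=0$. Since $P\ne O_E$ and $\psi_p(x(P))=\psi_p(T)\equiv 0\mod\p$, we conclude $P\in E_{\textbf{A}}[p](k_\p)$. As $p$ is prime and $P\ne O_E$, the order of $P$ in $E_{\textbf{A}}(k_\p)$ is exactly $p$, so $\langle P\rangle$ is a subgroup of order $p$. Lagrange's theorem then gives $p\mid |E_{\textbf{A}}(k_\p)|$, which is the desired congruence $|E_{\textbf{A}}(k_\p)|\equiv 0\mod p$.

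The one place that needs care, and the step I expect to be the genuine obstacle, is justifying that the division-polynomial criterion applies over the residue field $k_\p$ rather than over a field of characteristic zero. This is where the standing hypotheses $p\ne 2$ and $\operatorname{char}k_\p\ne 2,3$ play their role: the oddness of $p$ ensures that $\psi_p$ is an honest polynomial in $x$ alone of degree $(p^2-1)/2$, so that the specialization $x=T$ and the reduction modulo $\p$ are meaningful, while the restriction on $\operatorname{char}k_\p$ keeps the reduced Weierstrass model, and hence the multiplication identity $[p](x,y)=(\phi_p/\psi_p^2,\ \omega_p/\psi_p^3)$, well behaved. I would therefore check that reducing the universal division polynomial $\psi_p\in\Z[a_1,a_2,a_3,a_4,a_6,x]$ modulo $\p$ commutes with evaluating it on $E_{\textbf{A}}$, paying particular attention to the possibility $\operatorname{char}k_\p=p$, so that the equivalence ``$\psi_p(x(P))=0\iff P\in E_{\textbf{A}}[p]$'' survives the reduction. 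Modulo this verification, the proposition follows immediately from the three hypotheses.
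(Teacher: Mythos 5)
Your proof is correct and follows essentially the same route as the paper's: condition (i) makes $E_{\textbf{A}}$ an elliptic curve over $k_\p$, condition (ii) exhibits the point $P_T=(T \bmod \p, z_T)\ne O_E$, and the vanishing of $\psi_p$ at $T$ shows via the division-polynomial criterion that $P_T$ has exact order $p$, so $p$ divides $|E_{\textbf{A}}(k_\p)|$ by Lagrange. The one step you flag as needing verification --- the validity of the criterion when $\operatorname{char} k_\p = p$ --- is a legitimate point that the paper's proof silently elides, and it does resolve favorably: for an ordinary curve in characteristic $p$ the reduction of $\psi_p$ is a nonzero constant times the $p$-th power of the polynomial whose roots are exactly the $x$-coordinates of the nontrivial $p$-torsion points, while for a supersingular curve $\psi_p$ reduces to a nonzero constant, so the hypothesis $G_T(\textbf{A})\equiv 0 \bmod \p$ is then unsatisfiable and the proposition holds vacuously.
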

\begin{proof}
Condition i) guarantees that $E_{\textbf{A}}$ is an elliptic curve over $k_\mathfrak p$, whereas condition ii) implies that $P_T=(T\mod \p,z_T)\in E_{\textbf{A}}(k_\mathfrak{p})$. Finally, the fact that $G_T(A_1,A_2,A_3,A_4,A_6)\equiv 0 \mod \mathfrak p$  asserts that $P_T$ is a point of order $p$ in $ E_{\mathbf{A}}(k_\mathfrak{p})$.
\end{proof}

\begin{Theorem}\label{family3}
Consider the elliptic curve  
 \begin{eqnarray*}
 E_t: y^2=f_{t}(x):=x^3 -3( t^2+1)x^2 + 3x-1,\qquad\text{where }\Delta(E_t)=-432\ t^4 (9 + 4 t^2) \ne 0 .
 \end{eqnarray*}
 For any $t\in\Q\setminus\{0\}$, one has
 $|\widetilde{E}_{t,p}(\mathbb{F}_p)|\equiv 0 \mod 3$ for all primes $p\ne 2,3$ such that $\nu_p(t(9 + 4 t^2))=0$.    

 Moreover, there are infinitely many rational values of $t$ such that $|\widetilde{E}_{t,p}(\Fp)|\equiv 0\mod 6$ for a set $S$ of primes $p$ of density at least $2/3$; and $|\widetilde{E}_{t,p}(\Fp)|\equiv 0\mod 12$ for at least half of the primes in $S$. 
\end{Theorem}
\begin{Proof}
Although the elliptic curve $E_t$ does not possess a torsion point of order $3$ over $\Q(t)$, the curve $E_t$ and the curve $y^2 + 6t\,xy + 2t(4t^2+9)y = x^3$
are $3$-isogenous over $\Q(t)$, and the latter has a $3$-torsion point $(0,0)$.

The polynomial $f_{t}$ is irreducible over $\Q(t)$. Moreover, it is easily seen that the discriminant of $f_{t}$ is not a square in $\Q(t)$. It follows that the Galois group of $f_{t}$ over $\Q(t)$ is the symmetric group $S_3$ on a set of three elements. Hilbert's irreducibility theorem implies the existence of infinitely many rational values $t=t_0$ for which $f_{t_0}$ has Galois group $S_3$. In accordance with Chebotarev's density theorem, the density of primes $p$ for which $f_{t_0}$ has at least one root modulo $p$ is $2/3$. It follows that for each such $t=t_0$, the density of primes $p$ such that either $|\widetilde{E}_{t_0,p}(\Fp)|\equiv 0\mod 6$, when $f_{t_0}$ has exactly one root modulo $p$ and hence $\widetilde{E}_{t_0,p}(\Fp)$ has exactly one nontrivial $2$-torsion point; or $|\widetilde{E}_{t_0,p}(\Fp)|\equiv 0\mod 12$, when $f_{t_0}$ splits modulo $p$ and hence $\widetilde{E}_{t_0,p}(\Fp)$ has full $2$-torsion, is of density at least $2/3$. 
\end{Proof}

\begin{Theorem}
\label{family5}
Let $E_t$ be the elliptic curve described by
 \begin{eqnarray*}
 E_t: y^2=g_{t}(x):=x^3 -7t\ x^2+ 96 t^2\ x+256\ t^3,\qquad\text{where }\Delta(E_t)=-121634816\ t^6 \ne 0 .
 \end{eqnarray*}
 For any $t\in\Q\setminus\Q^2$ and any prime $p\ne 2,3,29$ with $\nu_p(t)=0$ such that $\left(\frac{t}{p}\right)=1$, one has
 $|\widetilde{E}_{t,p}(\mathbb{F}_p)|\equiv 0 \mod 5$.

 Moreover, there are infinitely many rational values of $t$ such that $|\widetilde{E}_{t,p}(\Fp)|\equiv 0\mod 10$ for a set $S$ of primes $p$ of density at least $1/6$; and $|\widetilde{E}_{t,p}(\Fp)|\equiv 0\mod 20$ for a positive proportion of the primes in $S$.  
\end{Theorem}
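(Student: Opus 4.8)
The plan is to exploit that every member of the family is a quadratic twist of the single curve $E_1:=E_{t=1}$. First I would record the isomorphism $E_t\xrightarrow{\sim}E_1$, $(x,y)\mapsto(x/t,\,y/t^{3/2})$, valid over $\Q(\sqrt t)$; indeed $g_t(tu)=t^3(u^3-7u^2+96u+256)$, so the whole torsion analysis collapses to the fixed curve $E_1:\ v^2=u^3-7u^2+96u+256$. On $E_1$ I would then exhibit the rational $5$-torsion point $P=(0,16)$: a one-line doubling gives $2P=(16,-64)$, and doubling again gives $4P=(0,-16)=-P$, so $5P=O$. Pulling $P$ back through the twist shows that $x=0$ is a root of the fifth division polynomial $\psi_5$ of $E_t$ for \emph{every} $t$.

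To deduce the first assertion I would apply Proposition \ref{parametrization} with the rational prime $5$ and $T=0$. Since $g_t(0)=256\,t^3=(16t)^2\,t$, condition (ii) there — solvability of $z_0^2\equiv g_t(0)$ modulo $p$ — holds precisely when $\left(\frac{t}{p}\right)=1$, while condition (i) is good reduction, guaranteed once $\nu_p(t)=0$ and $p\nmid 2\cdot 29$; the restriction $p\ne 3$ is exactly what the proposition demands. The factor $29$ enters because $\Delta(E_1)=-2^{22}\cdot 29$. Hence $\widetilde{E}_{t,p}(\Fp)$ carries a point of order $5$ and $5\mid|\widetilde{E}_{t,p}(\Fp)|$ for all such $p$. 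Note this only covers the density-$\tfrac12$ set $\left(\frac{t}{p}\right)=1$, so — unlike Theorem \ref{family3} — one cannot invoke Theorem \ref{thm:density1} to force a $\Q$-isogenous curve with rational $5$-torsion.

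For divisibility by $10$ and $20$ I would bring in the $2$-torsion. Under the twist, the $2$-torsion $x$-coordinates of $E_t$ are $t$ times the roots of the fixed cubic $h(u)=u^3-7u^2+96u+256$, so ``$\widetilde{E}_{t,p}$ has a rational point of order $2$'' is equivalent to ``$h$ has a root modulo $p$'', independently of $t$. I would check that $h$ is irreducible over $\Q$ and that $\disc(h)=\Delta(E_1)/16=-2^{18}\cdot 29$ is a non-square with quadratic resolvent $\Q(\sqrt{-29})$, so the splitting field $L$ of $h$ has $\Gal(L/\Q)\cong S_3$. Choosing any $t_0\in\Q\setminus\Q^2$ with $\Q(\sqrt{t_0})\ne\Q(\sqrt{-29})$ — infinitely many — gives $\sqrt{t_0}\notin L$, whence $\Gal(L(\sqrt{t_0})/\Q)\cong S_3\times\Z/2\Z$ and the $5$-torsion condition $\left(\frac{t_0}{p}\right)=1$ becomes independent of the $2$-torsion condition. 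Chebotarev then gives density $\tfrac12\cdot\tfrac23=\tfrac13\ge\tfrac16$ for the primes with $\left(\frac{t_0}{p}\right)=1$ and $h$ having a root mod $p$; for these $10\mid|\widetilde{E}_{t_0,p}(\Fp)|$, and this set is the claimed $S$. Those for which $h$ splits completely (full $2$-torsion) have density $\tfrac12\cdot\tfrac16=\tfrac1{12}$, a proportion $\tfrac14$ of $S$, and satisfy $20\mid|\widetilde{E}_{t_0,p}(\Fp)|$.

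The only genuine computation is the order-$5$ verification for $(0,16)$, i.e.\ that $x=0$ is a root of $\psi_5$; everything else is structural. The hard part will be securing the independence of the $5$- and $2$-torsion conditions, which rests on the linear disjointness of $\Q(\sqrt{t_0})$ from $L$. This fails precisely when $\Q(\sqrt{t_0})=\Q(\sqrt{-29})$, where the two Frobenius constraints couple and only the identity class in $S_3$ survives, so the mod-$10$ density drops to exactly $\tfrac16$. Thus $\tfrac16$ is the universal lower bound over all admissible $t_0$ (attained in the coupled case) while a generic $t_0$ gives $\tfrac13$, which is why the statement records ``at least $1/6$''; in either case the mod-$20$ proportion is positive, so excluding the single field $\Q(\sqrt{-29})$ suffices to produce infinitely many $t_0$ with the asserted behaviour.
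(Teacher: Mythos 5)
Your argument is correct, and its first half is essentially the paper's: the paper simply displays the factorization $\psi_5=-x(16t-x)(\cdots)$, notes $g_t(0)=2^8t^3$, and applies Proposition \ref{parametrization} with $T=0$ under $\left(\frac{t}{p}\right)=1$; your derivation of $\psi_5(0)=0$ via the twist to $E_1$ and the explicit order-$5$ point $(0,16)$ is a structural substitute for that display, and your accounting for the excluded primes via $\Delta(E_1)=-2^{22}\cdot 29$ matches the hypotheses. For the density statement your route is genuinely different and tighter. The paper shows $g_t$ has Galois group $S_3$ over $\Q(t)$, invokes Hilbert's irreducibility theorem to get infinitely many good $t_0$, and then combines the density-$2/3$ ``root mod $p$'' condition with the density-$1/2$ condition $\left(\frac{t_0}{p}\right)=1$ to conclude ``at least $1/6$'' --- implicitly by inclusion--exclusion ($1/2+2/3-1=1/6$), with no independence claim and with the mod-$20$ positive proportion left implicit. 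You instead observe that $g_{t_0}(t_0u)=t_0^3h(u)$, so every fiber has the \emph{same} splitting field $L$, rendering the Hilbert step superfluous; choosing $\Q(\sqrt{t_0})\ne\Q(\sqrt{-29})$ (the unique quadratic subfield of $L$) gives $\Gal(L(\sqrt{t_0})/\Q)\cong S_3\times\Z/2\Z$, hence genuine independence, density $1/3$, an explicit Chebotarev proof of the mod-$20$ proportion ($1/4$ of $S$), and a clean explanation of why $1/6$ is the universal lower bound (attained when $t_0\in-29\,\Q^{\times2}$). The paper's inclusion--exclusion buys uniformity in $t_0$ with no disjointness hypothesis; your compositum argument buys sharper densities and a complete proof of the $20$-divisibility clause. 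One phrasing caveat: in the coupled case your $1/6$ is the density of the \emph{constructed} set of primes, a lower bound for $\{p: 10\mid |\widetilde{E}_{t_0,p}(\Fp)|\}$ rather than its exact density --- harmless, since the theorem asserts only lower bounds.
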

\begin{Proof}
The $5$-th division polynomial $\psi_5$ of the elliptic curve $E_t$ is given by
\begin{eqnarray*}
\psi_5(x)=&-&x\ (16\ t - x)  (343597383680\ t^{10} + 42949672960\ t^9\ x - 
   5368709120\ t^8\ x^2 \\&+& 1929379840\ t^7\ x^3 - 56623104\ t^6\ x^4 + 
   33816576\ t^5\ x^5 - 835584\ t^4\ x^6 + 135936\ t^3\ x^7 \\&+& 
   5776\ t^2\ x^8 - 60\ t\ x^9 + 5\ x^{10}).
   \end{eqnarray*}
Now one obtains $g_t(0)=2^8\ t^3$. Therefore, if $\left(\frac{t}{p}\right)=1$, then $(0,2^4tm_t)\in \widetilde{E}_{t,p}(\Fp)$ where $t\equiv m_t^2\mod p$. We now conclude using Proposition \ref{parametrization}.

   The polynomial $g_{t}$ is irreducible over $\Q(t)$ with Galois group $S_3$.  Hilbert's irreducibility theorem yields the existence of infinitely many rational values $t=t_0$ for which $g_{t_0}$ has Galois group $S_3$. According to Chebotarev's density theorem, the density of primes $p$ for which $g_{t_0}$ has at least one root modulo $p$ is $2/3$. Since the set of primes $p$ for which $\left(\frac{t}{p}\right)=1$ and hence $|\widetilde{E}_{t,p}(\mathbb{F}_p)|\equiv 0 \mod 5$ is $1/2$, one gets that the density of primes $p$ such that either $|\widetilde{E}_{t_0,p}(\Fp)|\equiv 0\mod 10$ or $|\widetilde{E}_{t_0,p}(\Fp)|\equiv 0\mod 20$ is at least $1/6$. 
\end{Proof}
\begin{Remark}
The curve $E_t$ in Theorem \ref{family5} does not possess a rational point of order $5$ over $\Q(t)$. 
One also notices that $E_t$ is the quadratic twist by $t$ of the elliptic curve $E: y^2=x^3 -7\ x^2+ 96\ x+256$. Therefore, for all quadratic twists of $E$ over $\Q$, the order of the reduction of the twist is divisible by $5$ modulo primes of density at least $1/2$.  
\end{Remark}

\section*{Appendix}
\label{sec:computations}
In the following section, we consider elliptic curves $E$ over $\Q$ with torsion subgroup $E(\Q)_{\tors}$ for which there exists a quadratic field $K=\Q(\sqrt{D})$, where $D$ is a square free integer, over which one has $E(\Q)_{\tors}\subsetneq E(K)_{\tors}$. We provide an example of such an elliptic curve $E$ with every possibility of the pair $(E(\Q)_{\tors}, E(K)_{\tors})$ as listed in Proposition \ref{prop:En} together with the congruence classes of $|\widetilde{E}_p(\Fp)|\mod m$, $m=|E(K)_{\tors}|$, and the primes $p$ at which the latter classes occur computed in view of Theorem \ref{Oddorder} and Theorem \ref{Order}. The examples are listed in a table where 
\begin{itemize}
    \item[i)] the first column contains the label of the elliptic curve $E$ with minimal conductor satisfying the conditions above on the pair $(E(\Q)_{\tors}, E(K)_{\tors})$, where the labelling is following Cremona's tables, \cite{Cremona},
    \item[ii)] the second column contains the torsion subgroup $E(\Q)_{\tors}$ of $E$ over $\mathbb Q$,
    \item[iii)] the third column contains $E(K)_{\tors}$,
    \item[iv)] the forth column contains $D$,
    \item[v)] the fifth column contains the congruence classes of $|\widetilde{E}_p(\mathbb F_p)|\mod m$, where $m=|E(K)_{\tors}|$,
    \item[vi)] the sixth column contains the congruence classes of primes $p$ modulo an integer $a$ such that $|\widetilde{E}_p(\mathbb F_p)|\equiv t\mod m$ if $p\equiv s\mod a$. 
\end{itemize}

\begin{center}
\begin{longtable}{|c| c| c|c |l |l |} 
 \hline
 label & $E(\mathbb Q)_{\tors}$ & $E(K)_{\tors}$ & $D$ & $|\widetilde{E}_p(\mathbb{F}_p)|\mod m$ & $p$  \\ 
 \hline\hline
 175b2 & $\{O_E\}$ & $\mathbb Z/ 3 \mathbb Z$ & \multirow{2}{*}{$5$} & {$0 \mod 3$}  & $p\equiv 1,4\mod 5$ \\ \cline{5-6}
   &  &  & & $0,1 \mod 3$ & $p \equiv 2,3 \mod 5$ \\
 \hline
 75a2 & $\{O_E\}$ & $\mathbb Z/ 5 \mathbb Z$ & \multirow{2}{*}{$5$}& $0 \mod 5$ & $p\equiv 1,4\mod 5$\\ \cline{5-6}
  &  &  & & $1 \mod 5$ & $p\equiv 2\mod 5$\\ \cline{5-6}
  &  &  & & $ 3\mod 5$ & $p\equiv 3\mod 5$\\
 \hline
  208d1 & $\{O_E\}$ &$\mathbb Z/ 7\mathbb Z$ & \multirow{2}{*}{$-1$}& $0\mod 7$&  $p\equiv 1 \mod 4$ \\ \cline{5-6}
      &  & & & $0,1,3,4,5,6\mod 7$ & $p\equiv 3 \mod 4$ \\
   \hline
  54a2 & $\{O_E\}$ & $\mathbb Z/ 9 \mathbb Z$ & \multirow{2}{*}{$-3$} & $0\mod 9$ & $p\equiv 1 \mod 3$ \\ \cline{5-6}
  &  & &  & $0,3,6 \mod 9$ & $p\equiv 2 \mod 3$ \\
   \hline
   \hline
 98a4 & $\mathbb Z/ 2 \mathbb Z$  & $\mathbb Z/ 6 \mathbb Z$ & \multirow{2}{*}{$-7$} & $0\mod 6$ & $p\equiv 1,2,4 \mod 7$\\ \cline{5-6}
  &  & & &$0,4 \mod 6$ & $p\equiv 3,5,6 \mod 7$ \\
   \hline
  2880r6 & $\mathbb Z/ 2 \mathbb Z$  & $\mathbb Z/ 8 \mathbb Z$ & \multirow{2}{*}{$-6$} &$0\mod 8$ &$p\equiv1,5,7,11,19,23\mod 24$\\ \cline{5-6}
   &  & &  & $4 \mod 8$ & $p\equiv 13,17 \mod 24$ \\
   \hline
   150b3 & $\mathbb Z/ 2 \mathbb Z$  & $\mathbb Z/ 10 \mathbb Z$ & \multirow{2}{*}{$5$}& $0 \mod 10$ & $p\equiv 1,4\mod5$\\ \cline{5-6}
   &  &  & & $6 \mod 10$ & $p\equiv 2\mod 5$\\ \cline{5-6}
    &  &  & & $ 8\mod 10$ & $p\equiv 3\mod 5$\\ 
   \hline
   3150bk1 & $\mathbb Z/ 2 \mathbb Z$  & $\mathbb Z/ 16 \mathbb Z$ & \multirow{2}{*}{$-15$} & $0\mod 16$ & $p\equiv 1,2,4,8 \mod 15$  \\ \cline{5-6}
  &  & & & $0,4,8,12 \mod 16$ & $p\equiv 7,11,13,14\mod 15$ \\
   \hline
   14a3 & $\mathbb Z/ 2 \mathbb Z$  & $\mathbb Z/ 2 \mathbb Z\times \mathbb Z/ 2 \mathbb Z$ & \multirow{2}{*}{$-7$} & $0\mod 4$ & $p\equiv 1,2,4 \mod 7$  \\ \cline{5-6}
  &  & & & $0,2 \mod 4$ & $p\equiv 3,5,6 \mod 7$ \\
   \hline
  36a3 & $\mathbb Z/ 2 \mathbb Z$  &$\mathbb Z/ 2 \mathbb Z\times \mathbb Z/ 6 \mathbb Z$& \multirow{2}{*}{$-3$} & $0\mod 12$ & $p\equiv 1 \mod 3$  \\ \cline{5-6}
  &  & & & $0,6 \mod 12$ & $p\equiv 2 \mod 3$ \\
   \hline
   450a3 & $\mathbb Z/ 2 \mathbb Z$  & $\mathbb Z/ 2 \mathbb Z\times \mathbb Z/ 10 \mathbb Z$ & \multirow{5}{*}{$-15$} & $0 \mod 20$ & $p\equiv 1,2,4,8\mod 15$\\ \cline{5-6}
   &  & & & $6,16 \mod 20$ & $p\equiv 7\mod 15$\\ \cline{5-6}
    &  & & & $4,14 \mod 20$ & $p\equiv 11\mod 15$\\ \cline{5-6}
    &  & &  & $8,18 \mod 20$ & $p\equiv 13\mod 5$\\ \cline{5-6}
    &  &  & & $0,10\mod 20$ & $p\equiv 14\mod 15$\\ 
   \hline
   \hline
   50a3 & $\mathbb Z/ 3 \mathbb Z$  & $\mathbb Z/ 15 \mathbb Z$ & \multirow{3}{*}{$5$}& $0 \mod 15$ & $p\equiv 1,4\mod 5$\\ \cline{5-6}
   &  & &  & $3 \mod 15$ & $p\equiv 3\mod 5$\\ \cline{5-6}
   &  & & & $6 \mod 15$ & $p\equiv 2\mod 5$\\
   \hline
  19a1 & $\mathbb Z/ 3 \mathbb Z$ & $\mathbb Z/ 3 \mathbb Z\times \mathbb Z/ 3 \mathbb Z$ & \multirow{2}{*}{$-3$}& $0 \mod 9$ & $p\equiv 1\mod 3$\\ \cline{5-6}
   &  &  & & $0,3,6\mod 9$ & $p\equiv 2\mod 3$\\
   \hline
   \hline
   17a1 & $\mathbb Z/ 4 \mathbb Z$ & $\mathbb Z/ 2 \mathbb Z\times \mathbb Z/ 4 \mathbb Z$ & \multirow{2}{*}{$-1$} & $0 \mod 8$ & $p\equiv 1\mod 4$\\ \cline{5-6}
   &  &  & & $0,4 \mod 8$ & $p\equiv 3\mod 4$\\
   \hline
  192c6 & $\mathbb Z/ 4 \mathbb Z$ & $\mathbb Z/ 2 \mathbb Z\times \mathbb Z/ 8 \mathbb Z$ & \multirow{3}{*}{$-2$} & $0\mod 16$ & $p\equiv 1,3 \mod 8$ \\ \cline{5-6}
  & & & & $4,12\mod 16$ & $p\equiv 5 \mod 8$ \\ \cline{5-6}
   & & & & $0,8\mod 16$ & $p\equiv 7 \mod 8$ \\
   \hline
   \newpage
   \hline
  150c3 & $\mathbb Z/ 4 \mathbb Z$ & $\mathbb Z/ 2 \mathbb Z\times \mathbb Z/ 12 \mathbb Z$ & \multirow{3}{*}{$-15$} & $0\mod 24$ & $p\equiv 1,2,4,8 \mod 15$ \\ \cline{5-6}
  & & & & $0,12\mod 24$ & $p\equiv 11,14 \mod 15$ \\ \cline{5-6}
   & & & & $4,16\mod 24$ & $p\equiv 7,13 \mod 15$ \\
   \hline
   \hline
  50b1 & $\mathbb Z/ 5 \mathbb Z$ &$\mathbb Z/ 15 \mathbb Z$& \multirow{2}{*}{$5$}& $0\mod 15$ & $p\equiv 1,4 \mod 5$ \\ \cline{5-6}
  & & & & $0,10\mod 15$ & $p\equiv 2,3 \mod 5$ \\
   \hline
   \hline
  14a4 & $\mathbb Z/ 6 \mathbb Z$ & $\mathbb Z/ 2 \mathbb Z\times \mathbb Z/ 6 \mathbb Z$ & \multirow{2}{*}{$-7$} & $0\mod 12$ & $p\equiv 1,2,4 \mod 7$ \\ \cline{5-6}
  & & & & $0,6\mod 12$ & $p\equiv 3,5,6  \mod 7$ \\
   \hline
  14a1 & $\mathbb Z/ 6 \mathbb Z$ & $\mathbb Z/ 3 \mathbb Z\times \mathbb Z/ 6 \mathbb Z$ & \multirow{2}{*}{$-3$}& $0\mod 18$ & $p\equiv 1 \mod 3$ \\ \cline{5-6}
  & & & & $0,6,12 \mod 18$ & $p\equiv 2 \mod 3$ \\
   \hline
   \hline
  15a4 & $\mathbb Z/ 8 \mathbb Z$ & $\mathbb Z/ 2 \mathbb Z\times \mathbb Z/ 8 \mathbb Z$ & \multirow{2}{*}{$-1$} & $0\mod 16$ & $p\equiv 1 \mod 4$ \\ \cline{5-6}
  & & & & $0,8 \mod 16$ & $p\equiv 3 \mod 4$ \\
   \hline
   \hline
  63a2 & $\mathbb Z/ 2 \mathbb Z \times \mathbb Z/ 2 \mathbb Z$ & $\mathbb Z/ 2 \mathbb Z\times \mathbb Z/ 8 \mathbb Z$ & \multirow{2}{*}{$-3$} &  $0\mod 16$ & $p\equiv 1 \mod 3$ \\ \cline{5-6}
  & & & & $0,4,8,12 \mod 16$ & $p\equiv 2 \mod 3$ \\
 \hline
  960o6& $\mathbb Z/ 2 \mathbb Z \times \mathbb Z/ 2 \mathbb Z$ &$\mathbb Z/ 2 \mathbb Z\times \mathbb Z/ 12\mathbb Z$ & \multirow{3}{*}{$6$}& $0\mod 24$ & $p\equiv 1,5,19,23 \mod 24$ \\ \cline{5-6}
  & & & & $4,16 \mod 24$ & $p\equiv 7,13 \mod 24$ \\\cline{5-6}
   & & & & $0,12 \mod 24$ & $p\equiv 11,17 \mod 24$ \\
 \hline
 \hline
  15a3 & $\mathbb Z/ 2 \mathbb Z \times \mathbb Z/ 4 \mathbb Z$ &$\mathbb Z/ 2 \mathbb Z\times \mathbb Z/ 8 \mathbb Z$& \multirow{2}{*}{$5$} & $0\mod 16$ & $p\equiv 1,4 \mod 5$ \\ \cline{5-6}
  & & & & $0,8 \mod 16$ & $p\equiv 2,3 \mod 5$ \\ \hline 
 \hline
  90c6 & $\mathbb Z/ 2 \mathbb Z \times \mathbb Z/ 6 \mathbb Z$ & $\mathbb Z/ 2 \mathbb Z\times \mathbb Z/ 12 \mathbb Z$ & \multirow{ 2}{*}{$6$} & $0\mod 24$ & $p\equiv 1,5,19,23 \mod 24$ \\ \cline{5-6}
  & & & & $0,12 \mod 24$ & $p\equiv 7,11,13,17 \mod 24$ \\  
  \hline
\end{longtable}
\end{center}


\begin{thebibliography}{MM}
	\frenchspacing
	\renewcommand{\baselinestretch}{1}
	

	

\bibitem{Cremona}
J. E. Cremona, 
{\em Algorithms for modular elliptic curves}, second ed., Cambridge University Press, Cambridge, 1997, vi+376 pages.

\bibitem{David}
C. David, H. Kisilevsky and F. Pappalardi,
{\em Galois Representations with Non-Surjective Traces}, 
Canad. J. Math.,
\textbf{51} (1999), 936--951.

\bibitem{Demirci}
M. Demirci, G. Soydan and I. N. Cangul,
{\em Rational points on elliptic curves $y^2=x^3+a^3$ in $\Fp$ where $p\equiv 1\mod 6$ is prime},
Rocky Mountain Journal of Mathematics,
\textbf{37} (2007), 1483--1491.


\bibitem{Sporadic}
M. Derickx, A. Etropolski, M. Hoeij, J. Morrow,  and D. Zureick-Brown,  {\em Sporadic Cubic Torsion}, Algebra And Number Theory, \textbf{15} (2021), 1837--1864.

\bibitem{Deuring}
M. Deuring.
{\em Die Typen der Multiplikatorenringe elliptischer Funktionenkorper}, 
Abh. Math. Sem. Univ. Hamburg, \textbf{14} (1941), 197--272.







\bibitem{Enrique}
E. Gonzalez-Jimenez and J. M. Tornero,
{\em  Torsion of rational elliptic curves over quadratic fields}, 
Revista De La Real Academia De Ciencias Exactas, Fisicas Y Naturales. Serie A. Matematicas,  \textbf{108} (2014), 923--934.


\bibitem{Enrique2}
E. Gonzalez-Jimenez and J. M. Tornero,
{\em  Torsion of rational elliptic curves over quadratic fields II}, 
Revista De La Real Academia De Ciencias Exactas, Fisicas Y Naturales. Serie A. Matematicas,  \textbf{110} (2016), 121--143.

\bibitem{Illusie}
L. Illusie, 
 {\em Miscellany on traces in $\ell$-adic cohomology: a survey},
  Japan J. Math., \textbf{1} (2006), 107--136.

\bibitem{Kam}
S. Kamienny, {\em Torsion points on elliptic curves and $q$-coefficients of modular forms}, Invent. Math., \textbf{109} (1992), 221--229.
  
\bibitem{Katz}
N. Katz, {\em Galois properties of torsion points on abelian varieties}, Inventiones Mathematicae, \textbf{62} (1981), 481--502.   
  
\bibitem{Katz2}
N. Katz,
{\em Lang-Trotter revisited},
Bulletin (new series) of the American Mathematical Society,
\textbf{46} (2009), 413--457.
  
\bibitem{Kenku}
M. Kenku and F. Momose, {\em Torsion points on elliptic curves defined over quadratic fields}, Nagoya Mathematical Journal, \textbf{109} (1988), 125--149.
  
\bibitem{KKP}
D. Kim, J. K. Koo and Y. K. Park, 
 {\em On the elliptic curves modulo $p$},
  Journal of Number Theory, \textbf{128} (2008), 945--953.  

\bibitem{Kubert}
D. S. Kubert, 
{\em Universal bounds on the torsion of elliptic curves},
Proc. London Math. Soc., \textbf{3} (1976), 193--237.

\bibitem{Lang}
S. Lang and H. Trotter,
{\em Frobenius distributions in $\GL_2$-extensions}, 
Springer Lecture Notes in Mathematics 504, 1976. 
  
\bibitem{Laska}
M. Laska, M. Lorenz, 
{\em Rational points on elliptic curves over $\Q$ in elementary abelian $2$-extensions of
$\Q$},
 J. Reine Angew. Math., \textbf{355} (1985), 163--172. 
  
  
\bibitem{Marcus}
D. A. Marcus, 
{\em Number Fields},
Universitext. Springer, New York, 1977.

\bibitem{Mazur}
B. Mazur, 
{\em Rational points of abelian varieties with values in towers of number fields},
Invent. Math., \textbf{18} (1972), 183--266.
  
\bibitem{Mazur1}
B. Mazur,  {\em Modular curves and the Eisenstein ideal}, Publications Math\'{e}matiques De L'IH\'{E}S, \textbf{47} (1977), 33--186. 
  
\bibitem{Merel}
L. Merel,  {\em Bornes pour la torsion des courbes elliptiques sur les corps de nombres}, Invent. Math., \textbf{124} (1996), 437--449.


\bibitem{Najman}
F. Najman,
{\em The number of twists with large torsion of an elliptic curve},
Revista De La Real Academia De Ciencias Exactas, Fisicas Y Naturales. Serie A. Matematicas,  \textbf{109} (2015), 535--547.



\bibitem{Olson}
L. D. Olson,
{\em Hasse invariants and anomalous primes for elliptic curves
with complex multiplication},
Journal of Number Theory, \textbf{8} (1976), 397--414.



\bibitem{Padma}
R. Padma and S. Venkataraman,
{\em Elliptic curves with complex multiplication
and a character sum},
Journal of Number Theory, \textbf{61} (1996), 274--282.

\bibitem{PS}
A. Pajaziti and M. Sadek,
{\em The greatest common divisor of orders of reductions of an elliptic curve}, preprint.

\bibitem{Poulakis}
D. Poulakis,
{\em \'Evaluation d'une somme cubique de caracteres},
Journal of Number Theory, \textbf{27} (1987), 41--45.


\bibitem{Rabarison}
F. P. Rabarison, {\em Structure de torsion des courbes elliptiques sur les corps quadratiques},
Acta Arithmetica, \textbf{144} (2010), 17--52.

\bibitem{Raj}
A.R. Rajwade,
{\em The Diophantine equation $y^2 = x(x^2 + 21 Dx + 112D^2)$ and the conjectures of Birch and Swinnerton-Dyer}, J. Austral. Math. Soc. Ser. A,
\textbf{24} (1977), 286--295.
 
 \bibitem{Schmitt}
S. Schmitt and H. G. Zimmer, {\em Elliptic Curves: A
computational approach}, De Gruyter Studies in Mathematics 31, 2003.

\bibitem{Serre1}
J.-P. Serre, 
{\em Abelian 
$\ell$-adic Representations and Elliptic Curves}, 
McGill University lecture notes written with the collaboration of Willem Kuyk and John Labute, W. A. Benjamin, New York-Amsterdam, 1968.

\bibitem{silverman}
J. H. Silverman, {\em The Arithmetics of Elliptic Curves}, Springer,1986.

\bibitem{Sun}
Z.H. Sun, 
{\em On the number of incongruent residues of $x^4 + ax^2 + bx$ modulo $p$}, 
Journal of Number Theory, \textbf{119} (2006), 210--241.

\end{thebibliography}
\end{document}